\author{}
\date{}
\newtheorem{theorem}{Theorem}[section]
\newtheorem{thm}[theorem]{Theorem}
\newtheorem{cor}[theorem]{Corollary}
\newtheorem{prop}[theorem]{Proposition}
\newtheorem{lemma}[theorem]{Lemma}
\theoremstyle{definition}
\newtheorem{ex}[theorem]{Example}
\newtheorem{question}[theorem]{Question}
\newtheorem{remark}[theorem]{Remark}
\DeclareMathAlphabet{\mathpzc}{OT1}{pzc}{m}{it}
\begin{document}

\newcommand{\nat}{{\mathbb{N}}}
\newcommand{\Nrd}{{\mathrm{Nrd}}}
\newcommand{\tr}{{\mathrm{tr}}}
\newcommand{\id}{{\mathrm{id}}}
\newcommand{\Ad}{{\mathrm{Ad}}}
\newcommand{\rad}{{\mathrm{rad}}}
\newcommand{\pr}{{\mathrm{pr}}}
\newcommand{\Ker}{{\mathrm{Ker}}}
\newcommand{\wedges}[1]{\d b_1\wedge\ldots\wedge \d b_{#1}}
\newcommand{\rank}{{\mathrm{rank}}}
\renewcommand{\dim}{{\mathrm{dim}}}
\newcommand{\coker}{{\mathrm{Coker}}}
\newcommand{\End}{{\mathrm{End}}}
\newcommand{\Hom}{{\mathrm{Hom}}}
\newcommand{\ad}{{\mathrm{ad}}}
\newcommand{\rk}{{\mathrm{rk}}}
\newcommand{\Mon}{{\mathrm{Mon}}}
\newcommand{\Alt}{{\mathrm{Alt}}}
\newcommand{\Trd}{{\mathrm{Trd}}}
\newcommand{\Sym}{{\mathrm{Sym}}}
\newcommand{\Skew}{{\mathrm{Skew}}}
\newcommand{\sesq}{{\mathrm{Sesq}}}
\newcommand{\sep}{{\mathrm{Trd_{sep}}}}
\newcommand{\genquad}{{\mathrm{Q}}}
\newcommand{\herm}{{\mathrm{Herm}}}
\renewcommand{\epsilon}{\varepsilon}
\renewcommand{\geq}{\geqslant}
\renewcommand{\leq}{\leqslant}
\newcommand{\an}{{\mathrm{an}}}
\newcommand{\alt}{{\mathrm{alt}}}
\newcommand{\Int}{{\mathrm{Int}}}
\renewcommand{\Im}{{\mathrm{Im}}}
\newcommand{\qf}[1]{\mbox{$\langle #1\rangle $}}
\newcommand{\pff}[1]{\mbox{$\langle\!\langle #1
\rangle\!\rangle_b $}}
\newcommand{\pfr}[1]{\mbox{$\langle\!\langle #1 ]]$}}
\newcommand{\qp}[2]{\mbox{$[ {#1}\,|\!|\,{#2} )$}}
\newcommand{\HH}{{\mathbb H}}
\newcommand{\ZZ}{{\mathbb Z}}
\newcommand{\NN}{{\mathbb N}}
\newcommand{\FF}{{\mathbb F}}
\newcommand{\mg}[1]{#1^{\times}}
\newcommand{\sq}[1]{#1^{\times 2}}
\newcommand{\scg}[1]{\mg{F}/\sq{F}}
\newcommand{\s}{\sigma}
\newcommand{\lra}{\longrightarrow}
\newcommand{\qi}{\mid}
\newcommand{\qil}{\,\cdot\!\!\mid}
\newcommand{\qir}{\mid\!\!\cdot\,}
\newcommand{\qilr}{\,\cdot\!\!\mid\!\!\cdot\,}
\newcommand{\mf}{\mathfrak}
\newcommand{\qeuni}[1]{[#1)}

\newcommand\ideal[1]{{\left<#1\right>}}
\newcommand\sg[1]{{\left<#1\right>}}

%

%

\title{Total linkage of quaternion algebras and  {P}fister forms in characteristic two}

\thanks{
The first author was supported by Wallonie-Bruxelles International.
 The second author is supported by the {Deutsche Forschungsgemeinschaft} project \emph{The Pfister Factor Conjecture in characteristic two} (BE 2614/4).
 The third author acknowledges the support of the French Agence Nationale de la Recherche (ANR) under reference ANR-12-BL01-0005.}
\author{Adam Chapman}
\email{adchapman@math.msu.edu}
\address{Department of Mathematics, Michigan State University, East Lansing, MI 48824}
\author{Andrew Dolphin}
\email{Andrew.Dolphin@uantwerpen.be}
\address{Departement Wiskunde--Informatica, Universiteit Antwerpen, Belgium}
\author{Ahmed Laghribi}
\email{ahmed.laghribi@univ-artois.fr}
\address{Facult\'e des Sciences Jean Perrin, Laboratoire de math\'ematiques de Lens EA 2462, rue Jean Souvraz - SP18, 62307 Lens, France}

\begin{abstract}
We study the subfields of quaternion algebras that are quadratic extensions of their center in characteristic $2$.
We provide examples of the following: two non-isomorphic quaternion algebras that share all their quadratic subfields, two quaternion algebras that share all their inseparable  but not all their separable quadratic  subfields and two algebras that share all their separable but not all their inseparable quadratic  subfields.
We also discuss quaternion algebras over global fields and fields of Laurent series over a perfect field of characteristic $2$ and show that the quaternion algebras over these fields are determined by their separable quadratic  subfields. 
Throughout, these linkage questions are treated in the more general setting by considering the linkage of Pfister forms. 

%
%

\medskip\noindent
\emph{Keywords:}  Quadratic forms, Pfister forms, linkage, quaternion algebras, splitting fields, characteristic two, Laurent series, global fields.

\medskip\noindent
\emph{Mathematics Subject Classification (MSC 2010):} 11E81; 11E04, 16K20, .

\end{abstract}

\maketitle

\section{Introduction}

Let $F$ be a field. It is well known that a quadratic field extension of $F$ is a splitting field of some quaternion $F$-algebra $Q$ if and only if it is isomorphic to a subfield of $Q$ (see, for example, \cite[\S 14]{Draxl:1983}). This raised the question of to what extent do the quadratic extensions of $F$ contained in  $Q$ determine the structure of the algebra and motivated the study of `linked' quaternion algebras, that is, quaternion algebras that share a common quadratic extension of $F$ as a subfield. 

In \cite{GaribaldiSaltman:2010}, Garibaldi and Saltman studied the subfields of quaternion algebras over fields of characteristic not $2$, and in particular over number fields. They gave an example of two non-isomorphic quaternion algebras that share all their quadratic subfields and,  conversely, gave a sufficient condition on a field for every quaternion algebra over that field to be determined by its quadratic subfields.

Here we consider this question in characteristic $2$. Over fields of characteristic $2$ this question is complicated by the fact that two types of quadratic extension are possible, separable and inseparable, and every quaternion algebra contains quadratic subfields of each type.
One can show that if two quaternion algebras share an inseparable quadratic  subfield then they share a separable quadratic subfield, but that the converse is not always true (see \cite{Lam:char2quat}. This result  was also generalized to Hurwitz algebras in \cite{ElduqueVilla:2005}).
 This motivated the consideration of two different types of `linkage' in characteristic $2$, depending on whether the shared subfield of two quaternion algebras over $F$ was a quadratic separable or a quadratic inseparable extension of $F$.

Due to the well-known correspondence between quaternion algebras and $2$-fold Pfister forms, via the norm form of the quaternion algebra, linkage questions on a pair of quaternion algebras can be naturally reinterpreted as questions on whether a pair of $2$-fold Pfister forms become hyperbolic over a common quadratic extension. It is therefore natural to ask whether analogous  linkage properties hold for any pair of $n$-fold Pfister forms.

In this article, for any integer $n>1$, we show how to construct $n$-fold Pfister forms over fields of characteristic $2$ that are not isometric but become hyperbolic over all the same quadratic extensions {of the base field}. We also show how to construct $n$-fold Pfister forms that become hyperbolic over the same  inseparable quadratic extensions but not all the same separable quadratic extensions, and $n$-fold Pfister forms that become hyperbolic over all the same separable quadratic extensions but not all the same inseparable quadratic extensions. This  shows  in particular that the  linkage result from \cite{Lam:char2quat} does not have a natural analogue if we consider Pfister forms  becoming hyperbolic over all  quadratic  subfields of a certain type. We shall also discuss the translation of these results in the case of $2$-fold Pfister forms into results on quaternion algebras. 

Finally, we consider Pfister forms over fields with a unique inseparable quadratic extension. 
We show that there exist fields with this property such that there are  $2$-fold Pfister forms defined on  them  that  become hyperbolic over all the same quadratic subfields, yet still fail to be isometric. However, we also show that for  
 global fields and fields of Laurent series over a perfect field in characteristic $2$, $2$-fold Pfister  forms are determined by the separable quadratic extensions that they become hyperbolic over.

The collaboration between the authors started as a result of the workshop on ``Torsors, Motives and Cohomological Invariants" held at Fields Institute, Toronto in May 2013. We wish to thank Pasquale Mammone for bringing up the main research question. Thanks also to Adrian Wadsworth for many useful comments  on an earlier version of this article, and in particular for a simplified argument for Theorem \ref{thm:global}.


\section{Preliminaries on Quadratic forms}
Throughout this article, let $F$ be a field of characteristic $2$. We  denote the multiplicative group of $F$ by  $F^\times$ and the additive group $\{x^2+x\mid x\in F\}$ by $\wp(F)$.
We recall the basic definitions and results we use from the theory of quadratic forms over fields. We refer to \cite{Elman:2008} as a general reference.

A  \emph{bilinear form over $F$} is a pair $(V,b)$ where $V$ is a finite dimensional $F$--vector space and $b$ is a  $F$--bilinear map  $b:V\times V\rightarrow F$.  
 The \emph{radical of $(V,b)$} is the set $$\rad(V,b)=\{x\in V\mid b(x,y)=0 \textrm{ for all } y\in  V\}\,.$$
 We say that $(V,b)$ is
\emph{nondegenerate} if $\rad(V,b)= \{0\}$ and  \emph{degenerate} otherwise. 
We say  that $(V,b)$  is \emph{symmetric} if $b(x,y)=b(y,x)$ {for all $x,y\in V$}.

Let $\delta=(V,b)$ and $\eta=(W,b')$ be two symmetric bilinear forms over $F$.
By an \emph{isometry of bilinear forms $f:\delta\rightarrow\eta$} we mean an isomorphism of $F$--vector spaces $f:V\rightarrow W$ such that $b(x,y)=b'(f(x),f(y))$ for all $x,y\in V$. If such an isometry exists, we say $\delta$ and $\eta$ are \emph{isometric} and we write $\delta\simeq\eta$. 
The \emph{tensor product of $\delta$ and $\eta$} is defined to be the pair $(V\otimes W,b'')$ where the $F$--bilinear map $b'':(V\otimes W)\times (V\otimes W)\rightarrow F$ is given by $b''(v_1\otimes w_1,v_2\otimes w_2)= b(v_1,v_2) \cdot b'(w_1,w_2)$ for all $v_1, v_2\in V$ and $w_1,w_2\in W$, and we write  $\delta\otimes \eta= (V\otimes W, b'')$.

For $a_1,\ldots,a_n\in F^\times$, let $(F^n,b)$ be  the symmetric bilinear form $(F^n,b)$ where  $$b:F^n\times F^n\rightarrow F, \qquad((x_1,\ldots, x_n),(y_1,\ldots, y_n))\mapsto \sum_{i=1}^n a_ix_iy_i .$$
We   denote the symmetric bilinear  form $(F^n,b)$  by $\qf{a_1,\ldots,a_n}_b$.
For $a\in F^\times$ we denote $\qf{1,a}_b$  by $\pff{a}$.
For  $a_1,\ldots,a_m\in \mg{F}$, $(m\geqslant 1)$, we denote by $\pff{a_1,\ldots,a_m}$ the symmetric bilinear form $\pff{a_1}\otimes\ldots\otimes\pff{a_m}$. We call any bilinear form isometric to  $\pff{a_1,\ldots,a_m}$  for some $a_1,\ldots,a_m\in F$ a  \emph{bilinear  $m$--fold Pfister form}. We consider  $\qf{1}_b$ as the $0$-fold bilinear Pfister form.
For any $m$-fold bilinear Pfister form $\delta$ we may write $\delta\simeq \qf{1}_b\perp \delta'$. If $\delta$ is anisotropic, then $\delta'$ is unique and called the \emph{pure part of $\delta$} (see \cite[p.36]{Elman:2008}).

By a \textit{quadratic form over $F$} we  mean a pair $(V,q)$ of a finite dimensional $F$-vector space $V$ and a map  $q:V\rightarrow F$ such that
 $q(\lambda x)=\lambda^2q(x)$ for all $x\in V$ and $\lambda\in F$, and such that $b_q:V\times V\rightarrow F,(x,y)\longmapsto q(x+y)-q(x)-q(y)$ is $F$--bilinear.
Then $(V,b_q)$ is a symmetric bilinear form over $F$, called the \emph{polar form of $(V,q)$}. If $(V,b_q)$ is nondegenerate then we call $(V,q)$ \emph{nonsingular},  and \emph{singular} otherwise.
By the \emph{radical of $(V,q)$} we mean the set 
$$\rad(V,q)= \{ x\in \rad(V,b_q)\mid q(x)= 0\}\,.$$
We say that $(V,q)$ is \emph{regular} if $\rad(V,q)=\{0\}$.

Consider a quadratic form $\rho=(V,q)$ over $F$. We call $\dim_F(V)$ the \emph{dimension of $\rho$} and denote it by $\dim(\rho)$.
 We say that $\rho$ is \emph{isotropic} if $q(x)=0$ for some $x\in V\setminus\{0\}$,  and in this case we call $x$ an \emph{isotropic vector of $\rho$}. Otherwise we say that $\rho$ is \emph{anisotropic}. We say that $\rho$ \emph{represents an element $a\in {F}^\times$} if there exists an $x\in V$ such that $q(x)=a$.
For $c\in {F}^\times$ let $c\rho$ denote the quadratic form $(V,cq)$, where $(cq)(x)=c(q(x))$ for all $x\in V$.
Let $\rho_1=(V,q)$ and $\rho_2=(W,q')$ be quadratic forms over $F$.
By an \emph{isometry of quadratic  forms $\rho_1\rightarrow\rho_2$} we mean an isomorphism of $F$--vector spaces $f:V\rightarrow W$ such that $q(x)=q'(f(x))$ for all $x\in V$. If such an isometry exists, we say $\rho_1$ and $\rho_2$ are \emph{isometric} and we write $\rho_1\simeq\rho_2$.
We say that $\rho_1$ is \emph{similar to $\rho_2$} if there exists $c\in F^\times$ such that $\rho_1\simeq c\rho_2$.

For $n\in\mathbb{N}$ and  $a_1,\ldots, a_n,b,c\in F$, we denote the quadratic form $(F^n,q)$ where $q:F^n\rightarrow F$ is given by $(x_1,\ldots, x_n)\mapsto \sum_{i=1}^n a_ix_i^2$ by $\qf{a_1,\ldots, a_n}$, and the
 quadratic form $(F^2,q')$ where $q':F^2\rightarrow F$ is given by $(x,y)\mapsto bx^2+xy+cy^2$ by $[b,c]$.

The following is an elementary result which is well known, but we include  a proof for convenience.

\begin{lemma}\label{cor:roundsim}
Let $\rho$ and $\rho'$ be $2$-dimensional quadratic forms over $F$ representing $1$. If $\rho$ is similar to $\rho'$ then $\rho\simeq\rho'$.
\end{lemma}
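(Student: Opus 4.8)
The plan is to put each form into a normal form and then read off the conclusion from an invariant. First, observe that similarity preserves the type $(n,m)$: isometry fixes the dimension $m$ of the quasilinear part, while scaling sends $[a,b]$ to $[ca,c^{-1}b]$ and $\qf{a,b}$ to $\qf{ca,cb}$. Hence $\rho\simeq c\rho'$ forces $\rho$ and $\rho'$ to have the same type, and since a $2$-dimensional form is either nonsingular (type $(1,0)$) or totally singular (type $(0,2)$), the argument splits into two cases. In each I first exploit the hypothesis that the forms represent $1$ in order to normalise them.

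\emph{Nonsingular case.} Let $q$ be the quadratic map of $\rho$ and let $b_q(x,y)=q(x+y)+q(x)+q(y)$ be its polar bilinear form, which is nondegenerate. Choosing $v$ with $q(v)=1$ and then $w$ with $b_q(v,w)=1$ exhibits $\rho\simeq[1,s]$ with $s=q(w)$, and likewise $\rho'\simeq[1,t]$. Since $\Delta([1,s])=s$ and $\Delta([1,t])=t$ in $F/\wp(F)$, and since similar nonsingular forms have equal Arf invariant, $\rho\simeq c\rho'$ gives $s\equiv t\pmod{\wp(F)}$. Writing $t=s+e^2+e$ and passing from the basis $\{v,w\}$ to $\{v,w+ev\}$ produces an explicit isometry $[1,s]\simeq[1,t]$, whence $\rho\simeq\rho'$.

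\emph{Totally singular case.} Here the polar form vanishes, so every basis diagonalises $\rho$ and $q$ is additive up to squares; using a representation of $1$ one normalises $\rho\simeq\qf{1,s}$ and $\rho'\simeq\qf{1,t}$. The crucial point is that the value set of $\qf{1,s}$ is $F^2+sF^2$, and since $s^2\in F^2$ this is the field $K=F^2(s)$; similarly $\qf{1,t}$ has value set the field $L=F^2(t)$. Value sets scale under similarity, so $\rho\simeq c\rho'$ yields $K=cL$. As $1\in L$ we get $c\in K$, and because $K$ is a field this gives $cK=K$, so $L=c^{-1}K=K$. Thus $t\in K=F^2+sF^2$, say $t=\alpha^2+s\beta^2$ with $\beta\neq0$ when $\rho$ is anisotropic, and the $F$-linear map sending the basis of $\qf{1,t}$ to $\{e_1,\alpha e_1+\beta e_2\}$ is an isometry $\rho'\simeq\rho$. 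When $\rho$ is isotropic one has $s\in F^2$, the same computation forces $t\in F^2$, and both forms are isometric to $\qf{1,0}$.

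I expect the main obstacle to be the totally singular case, and specifically seeing why the similarity factor $c$ must disappear. The relation $\rho\simeq c\rho'$ alone certainly does not imply $\rho\simeq\rho'$ for totally singular forms; what rescues the statement is the hypothesis that both represent $1$, which makes the value set a field containing $1$ and hence closed under multiplication by $c\in K$. Identifying this field structure is the key step; once it is in place the conclusion is immediate, and the nonsingular case is comparatively routine given the Arf invariant machinery already recorded in the excerpt.
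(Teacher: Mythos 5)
Your proof is correct, but it takes a genuinely different route from the paper's. The paper's argument is a three-line roundness argument: any $2$-dimensional form $\rho$ representing $1$ satisfies $a\rho\simeq\rho$ for every nonzero represented $a$ (citing (9.9) and (10.3) of Elman--Karpenko--Merkurjev), so from $\rho\simeq a\rho'$ one reads off that $\rho$ represents $a$, whence $\rho\simeq a\rho\simeq a^2\rho'\simeq\rho'$; no case distinction is needed. You instead split by type, normalise to $[1,s]$ versus $\qf{1,s}$, and finish with the Arf invariant in the nonsingular case and the observation that the value set $F^2+sF^2$ is a field in the totally singular case. Both cases check out: the basis change $\{v,w\}\mapsto\{v,w+ev\}$ does realise $[1,s]\simeq[1,s+e^2+e]$, and the field argument correctly explains why the similarity factor $c$ is absorbed (your totally singular case is in effect a hands-on proof of roundness for $\qf{1,s}$, since the value set being a field containing $1$ is exactly the multiplicativity the paper imports as a citation). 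What your version buys is self-containedness and an explicit isometry in each case; what it costs is length and a case analysis that the roundness lemma makes unnecessary. Your closing remark is also on target: the similarity factor genuinely cannot be ignored for totally singular forms in general, and the representation of $1$ is what saves the statement, in both your argument and the paper's.
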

\begin{proof} Note that for all   non-zero elements $a$ represented by $\rho$ we have
$a\rho\simeq \rho$. If $\rho\simeq \qf{1,0}$, this is obvious, otherwise it follows from  \cite[(9.9) and (10.3)]{Elman:2008}.
If $\rho$ is similar to $\rho'$ then there exists an element $a\in F^\times$ such that $\rho\simeq a\rho'$. As $\rho'$ represents $1$, it follows that $\rho$ represents $a$. Therefore  we have that $\rho\simeq a\rho \simeq a^2\rho'\simeq \rho'.$\end{proof}

We say that $\rho_2=(W,q')$ is \emph{dominated by $\rho_1=(V,q)$} if there exists an injective map $t:W\lra V$ such that $q(t(x))= q'(x)$ for all $x\in W$. We say that $\rho_2$ is \emph{weakly dominated by $\rho_1$} if $\rho_2$ is dominated by a form similar to $\rho_1$.  
 Note that if $\rho_1$ and $\rho_2$ are of the same dimension, then $\rho_2$ being dominated by  $\rho_1$ is equivalent to $\rho_1\simeq \rho_2$ and similarly, in this case $\rho_2$ being weakly dominated  by $\rho_1$ is equivalent to $\rho_1$ and $\rho_2$ being similar.
 We say $\rho_2$ is a \emph{subform of $\rho_1$} if there exists a quadratic form $\rho_3$ over $F$ such that $\rho_1\simeq\rho_2\perp\rho_3$. Note that a subform of a quadratic form is also dominated by that quadratic form and that if $\rho_2$ is a nonsingular form dominated by  $\rho_1$ then $\rho_2$ is also a subform of $\rho_1$ (see \cite[(7.10)]{Elman:2008}).

By \cite[(7.31)]{Elman:2008}, for every quadratic  form $\rho$ over $F$, there exist elements $a_1,b_1,\ldots, a_n,b_n\in F$ and $c_1,\ldots,c_m\in F$ such that
\begin{eqnarray}\label{eqnarray:quadform} \rho\simeq [a_1,b_1]\perp\ldots\perp[a_n,b_n]\perp\qf{c_1,\ldots, c_m}\,.\end{eqnarray}
We say that a quadratic form $\rho$ as in (\ref{eqnarray:quadform}) is of \emph{type} $(n,m)$. 
 In this case, $m$ is the dimension of the  radical of the polar form of $\rho$, and $2n=\dim(\rho)-m$.
As isometries preserve the dimension and the radical of the polar form of a quadratic form, they also preserve the type. Moreover, as scaling a quadratic form does not change the type, similar quadratic forms must also have the same type.
Note  that $\rho$ is 
nonsingular if and only if $m=0$. If $n=0$ we say that $\rho$ is \emph{totally singular}. The form $ \qf{c_1,\ldots, c_m}$ is uniquely determined up to isometry  by $\rho$, and we call it the \emph{quasilinear part of $\rho$}. Note that $\rho$ is  regular if and only if $\qf{c_1,\ldots, c_m}$ is anisotropic. 
We use the following isometry,  which can be checked directly. For $a,b,c,d\in F$
we have  $$[a,b]\perp [c,d]\simeq [a+c, b] \perp [c,b+d].$$


 We call the quadratic form $[0,0]$ the \emph{hyperbolic plane} and denote it by $\HH$. 
Let  $\rho$ be a  quadratic form over $F$.  Then there exists an anisotropic  quadratic form $\rho'$,  and a nonnegative integers $n,m$ such that  $\rho\simeq \rho'\perp n\times \HH\perp m\times\qf{0}$.  In this decomposition, the integers $n,m$ are uniquely determined and  $\rho'$  is uniquely determined up to isometry (see  \cite[(2.4)]{HoffmannLaghribi:qfpfisterneigbourc2}). We  call $\rho'$ \emph{the anisotropic part of $\rho$}   and denote it  by $\rho_\an$.
We call the integer $n$ \emph{the Witt index of  $\rho$} and denote it by $i_W(\rho)$.

\begin{lemma}\label{lemma:subfom}
Let $\rho$ and $\psi$ be anisotropic  quadratic forms over $F$ such that $\rho$ is nonsingular. Then $\psi$ is dominated by $\rho$ if and only if $i_W(\rho\perp \psi)=\dim(\psi)$.
\end{lemma}
\begin{proof}
See \cite[(2.16)]{HoffmannLaghribi:Isoqfffquadricc2}.
\end{proof}

Let $\rho$ be a nonsingular quadratic form over $F$. Then we say that $\rho$  is \emph{hyperbolic} if $\dim (\rho)=2i_W(\rho)$, 
that is, $\rho$ is hyperbolic if  $\rho\simeq \frac{1}{2} \dim(\rho)\times \HH$. We say two nonsingular quadratic forms $\rho$ and $\rho'$ are \emph{Witt equivalent} if $\rho_\an\simeq\rho'_\an$. This is an equivalence relation on the set of nonsingular quadratic forms over a field $F$ (see  \cite[\S 8.A]{Elman:2008})



Let $\rho$ be as in (\ref{eqnarray:quadform}) and  nonsingular. Then the  class of $a_1b_1+\ldots+a_nb_n$ in $F/\wp(F)$ is an invariant of $\rho$ called \emph{the Arf invariant}  (see \cite[(13.5)]{Elman:2008}). We denote this invariant by $\Delta(\rho)$. Note that as for all $a,b\in F$ and $c\in F^\times$ we have that $c[a,b]\simeq [ca, c^{-1}b]$, similar nonsingular quadratic forms have the same Arf invariant. Moreover, two Witt equivalent nonsingular quadratic forms have the same Arf invariant  (see \cite[(\S13)]{Elman:2008}).



  Let $\delta=(V,b)$ be a nondegenerate  symmetric  bilinear form over $F$ and $\rho=(W,q)$ be a quadratic form over $F$. There is a natural  map $b\otimes q:V\otimes_F W\rightarrow F$ determined by the rule that 
$( b\otimes q) (v\otimes w)=  b(v,v) \cdot q(w)$
for all $w\in W, v\in V$, and $(V\otimes_F W, b\otimes q)$ is a quadratic form over $F$ with polar form $b\otimes b_q$, called the \emph{tensor product of $\delta$ and $\rho$} and denoted $ \delta\otimes\rho$.

{For $n\geq 1$ an integer,} by an $n$-fold Pfister form over $F$, we mean the tensor product of a $2$-dimensional nonsingular quadratic form over $F$ which represents $1$ with an $(n-1)$-fold bilinear Pfister form over $F$ (see \cite[\S 9]{Elman:2008}).
For  $a \in F$ and $b_1,\ldots, b_{n-1}\in F^\times$ ($n>1$), we denote the $n$-fold Pfister  form $\pff{b_1,\ldots, b_{n-1}}\otimes[1,a]$  by  $\pfr{b_1,\ldots, b_{n-1},a}$.  Pfister forms have the property that they are either anisotropic or hyperbolic (see \cite[(9.10)]{Elman:2008}).
Note that for all $n$-fold Pfister forms $\rho$ with $n>1$ we have  $\Delta(\rho)\in\wp(F)$. 

We call a quadratic form $\rho$ a \emph{Pfister neighbour} if there exist an $n$-fold Pfister form $\pi$ for some $n$ such that $\rho$ is weakly dominated by $\pi$ and $\dim(\rho)>2^{n-1}$. In this case $\pi$ is uniquely determined by $\rho$ up to isometry and $\rho$ is isotropic if and only if $\pi$ is hyperbolic (see \cite[(23.11)]{Elman:2008}).

\section{Quadratic splitting fields of {P}fister forms}

Let $\rho=(V,q)$ be a quadratic form over $F$ and
let $K/F$ be a field extension. Then we write $\rho_K=(V\otimes_F K, q_K) $, where the quadratic map $q_K:V\otimes_FK\rightarrow K$ is determined by $q_K(v\otimes k)=k^2q(v)$ for all $v\in V$ and $k\in K$. 
Note that if $K/F$ is a transcendental extension, then $\rho$ is anisotropic if and only if $\rho_K$ is isotropic (see  \cite[(7.15)]{Elman:2008}).

\begin{prop}\label{lemma:subs}
Let $\pi$ be an anisotropic $n$-fold  Pfister form over  $F$ and $F(\alpha)/F$ be a field extension such that $\alpha^2+\alpha=c\in F\backslash\wp(F)$. 
Then the following are equivalent.
\begin{itemize}
\item[($1$)] $\pi_{F(\alpha)}$ is hyperbolic.
\item[($2$)] $[1,c]$ is a subform of $\pi$.
\item[($3$)]  $\pi\simeq\pfr{b_1,\ldots, b_{n-1},c}$ for some $b_1,\ldots, b_{n-1}\in F^\times$.
\item[$(4)$] $i_W(\pi\perp [1,c])=2$. 
\end{itemize}
\end{prop}
\begin{proof}  For $(1)$ implies $(2)$, {see} \cite[(34.11)]{Elman:2008}.
For $(2)$ implies $(3)$, see \cite[Chapt. IV, (4.1)]{Baeza:1978}.
If $(3)$ holds, then $\pi_{F(\alpha)}$ is isotropic, and hence hyperbolic, and therefore $(1)$ holds.
That $(2)$ implies $(4)$ is clear. If $(4)$ holds, then $[1,c]$ is a subform of $\pi$ by 
Lemma \ref{lemma:subfom}, and hence $(4)$ implies $(2)$.
\end{proof}

\begin{prop}\label{lemma:reps}  
Let $\pi$ be an anisotropic   $n$-fold Pfister form over  $F$ and $d\in F\backslash F^2$. If $n=1$ then $\pi_{F(\sqrt{d})}$ is anisotropic. Otherwise, the following are equivalent.
\begin{itemize}
\item[($1$)] $\pi_{F(\sqrt{d})}$ is hyperbolic.
\item[($2$)] $\pi$ dominates  $\qf{1,d}$.
\item[($3$)]  $\pi\simeq\pfr{d,b_1,\ldots, b_{n-2},a}$ for some $a\in F$ and $b_1,\ldots, b_{n-2}\in F^\times$.
\item[$(4)$] $i_W(\pi\perp \qf{1,d})=2$.
\end{itemize}
\end{prop}
\begin{proof} For $n=1$ see \cite[(1.1)]{laghribi:qfdim6}.
Otherwise we may assume that $n>1$.
For $(1)$ implies $(3)$, by 
 \cite[(1.4)]{laghribi:wittkers}
we have that $\pi\simeq\pfr{d+x^2,b_1,\ldots, b_{n-2},a'}$ for some $a',x\in F$ and $b_1,\ldots, b_{n-2}\in F^\times$. As $\pff{b_1,\ldots, b_{n-2}}$ represents $1$, it follows easily from \cite[(15.6)]{Elman:2008} that $\pi\simeq\pfr{d,b_1,\ldots, b_{n-2},a}$ for some $a\in F$.
That $(3)$ implies $(2)$ is obvious.
If $(2)$ holds then $\pi_{F(\sqrt{d})}$ is isotropic and hence hyperbolic, and therefore $(1)$ holds.
The equivalence of $(3)$ and $(4)$ follows from Lemma \ref{lemma:subfom}.
\end{proof}

Proposition \ref{lemma:subs}  (respectively, Proposition \ref{lemma:reps}) says that an  $n$-Pfister form becomes hyperbolic over a separable (resp.~inseparable) quadratic extension if and only if it is a multiple of a corresponding $1$-fold Pfister form (resp.~$1$-fold bilinear Pfister form). In 
 \cite{fairve:thesis}, this idea is generalized, and the `linkage' of two $n$-fold Pfister forms by $m$-fold Pfister or bilinear Pfister forms ($m<n$) is considered.

\section{Function fields of quadratic forms}

In this section we collect the various results we need on the behaviour of quadratic forms over the function fields of quadratic forms.
Assume $\rho$ is regular.
If $\dim(\rho)\geqslant 3$ or if $\rho$ is  anisotropic of dimension $2$, then we call the function field of the projective quadric over $F$ given by $\rho$ the \emph{function field of $\rho$} and denote it by $F(\rho)$. In the remaining cases we set $F(\rho)=F$.
This agrees with the definition in \cite[Section 22]{Elman:2008}.  For a regular quadratic form $\rho$, the field extension $F(\rho)/F$ is purely transcendental if and only if $\rho$ is isotropic (see \cite[(22.9)]{Elman:2008}).

 The following theorem collects the main major results we will use, the subform Theorem and the $2$-power separation Theorem.

\begin{thm}\label{prop:subformthm}
Let  $\varphi$ and $\psi$ be anisotropic quadratic forms over $F$.
\begin{enumerate}[$(a)$]
\item If $\varphi_{F(\psi)}$ is hyperbolic then $\psi$ is weakly dominated by  $\varphi$.
\item If $\dim(\varphi)\leqslant 2^n< \dim(\psi)$ then $\varphi_{F(\psi)}$ is anisotropic.
\end{enumerate}
\end{thm}
\begin{proof}
See \cite[(22.9)]{Elman:2008} and \cite[(1.1)]{HoffmannLaghribi:Isoqfffquadricc2} respectively.
\end{proof}

\begin{prop}\label{prop:oneoverother}
 Let $L=F(\alpha)$ where $\alpha^2+\alpha=c$ for  $c\in F\backslash \wp(F)$ and let $K=F(\sqrt{d})$ for $d\in F\backslash F^2$. 
Let $\pi$ and $\pi'$ be anisotropic  $n$-fold Pfister forms such that $\pi_L$ and $\pi'_K$ are anisotropic and let $\rho= (\pi\perp[1,c])_\an$ and $\psi=(\pi'\perp\qf{1,d})_\an$.  Then $\rho_{F(\psi)}$ and $\psi_{F(\rho)}$ are anisotropic. 
\end{prop}
\begin{proof} Note first that by Lemmas \ref{lemma:subs} and \ref{lemma:reps} the forms $\rho$ and $\psi$ are of dimension $2^n$.
Assume the form $\rho_{F(\psi)}$ is isotropic.  As $[1,c]_L$ is hyperbolic and $\pi_L$ is anisotropic, it follows that $\rho_L\simeq \pi_L$.
 Then
as $\rho_{F(\psi)}$ is isotropic, it follows that  $\rho_{L(\psi)}\simeq \pi_{L(\psi)}$ is hyperbolic. 

Suppose $\psi_L$ is isotropic. Then, as $\qf{1,d}_{L}$ is anisotropic, $\psi_L$ is regular and hence $L(\psi)/L$ is a purely transcendental extension, and therefore  $\pi_L$ isotropic, a contradiction. Hence $\psi_L$ is anisotropic, and it  follows from Theorem \ref{prop:subformthm}, $(a)$ that $\psi_L$ is weakly  dominated by $\pi_L$.  As $\psi_L$ and $\pi_L$ are of the same dimension, if  $\psi_L$ is weakly  dominated by $\pi_L$ then  $\psi_L$ and $\pi_L$  are similar. However, as $\psi_L$ is of type $(2^{n-1}-1, 2)$ and $\pi_L$ is of type $(2^{n-1},0)$, this cannot occur.  Hence $\rho_{F(\psi)}$ is anisotropic.

Assume now that $\psi_{F(\rho)}$ is isotropic.  Let $b_1,\ldots,b_{n-1}\in F^\times$ and $a\in F$  be such that $\pi'\simeq\pfr{b_1,\ldots,b_{n-1},a}$ and let $\delta$ be the pure part of $\pff{b_1,\ldots,b_{n-1}}$. Then we have that $$\psi_K\simeq (\delta\otimes[1,a]\perp \qf{1})_\an\perp \qf{0}\,.$$ 
As $\qf{1,d}_{F(\rho)}$ is anisotropic by Theorem \ref{prop:subformthm}, $(b)$, we have that $i_W(\psi_{F(\rho)})\geqslant 1$. Hence 
$$\psi_{F(\rho)}\simeq \HH \perp \varphi\perp \qf{1,d}_{F(\rho)}$$
for some nonsingular quadratic form $\varphi$ over $L$ of dimension $2^n-4$.
Considering these forms over $K(\rho)$ gives
$$ \psi_{K(\rho)} \simeq  ( \HH \perp \varphi\perp \qf{1})_{K(\rho)}\perp \qf{0}\simeq (\delta\otimes[1,a] \perp \qf{1})_{K(\rho)}\perp \qf{0}\,.$$ 
As $( \HH \perp \varphi\perp \qf{1})_{K(\rho)}$ and $ (\delta\otimes[1,a] \perp \qf{1})_{K(\rho)}$ are regular,  \cite[(2.6)]{HoffmannLaghribi:qfpfisterneigbourc2} gives 
$$( \HH \perp \varphi\perp \qf{1})_{K(\rho)}\simeq (\delta\otimes[1,a] \perp \qf{1})_{K(\rho)}\,.$$ 
Note that $\delta\otimes[1,a]\perp\qf{1}$ is a Pfister neighbour of $\pi'$. Hence $\pi'_{K(\rho)}$ is hyperbolic. 

Suppose $\rho_K$ is isotropic. Then $K(\rho)/K$ is a purely transcendental extension and therefore  $\pi'_K$ is isotropic, a contradiction. Hence $\rho_K$ is anisotropic, and it then follows from Theorem \ref{prop:subformthm}, $(a)$ that $\rho_K$ is weakly  dominated by $\pi'_K$. In particular $\Delta(\rho_K)\in\wp(K)$ and hence $\Delta(\rho)\in\wp(F)$ as $K/F$ is an inseparable quadratic extension. However, $\Delta(\rho)= c\mod\wp(F)$ and $c\notin\wp(F)$ by assumption. Hence $\psi_{F(\rho)}$ is anisotropic. 
\end{proof}

\begin{prop}\label{prop:sepoversep} Let  $L=F(\alpha)$ where $\alpha^2+\alpha=c$ and $K=F(\beta)$ where $\beta^2+\beta=d$  for  $c,d\in F\backslash \wp(F)$.
Let $\pi$ and $\pi'$ be anisotropic $n$-fold Pfister  forms such that $\pi_L$ and $\pi'_K$ are anisotropic and let $\rho= (\pi\perp[1,c])_\an$ and $\psi=(\pi'\perp[1,d])_\an$.  If $\rho_{F(\psi)}$ is isotropic then $L\simeq_F K$.
\end{prop}
\begin{proof}By Proposition \ref{lemma:subs} $\rho$ and $\psi$ have dimension $2^n$. 
As $\pi_L$ is anisotropic and $[1,c]_L$ is hyperbolic, we have that $\rho_L\simeq \pi_L$. Further, as $\rho_{F(\psi)}$ is isotropic, we have that $\rho_{L(\psi)}\simeq\pi_{L(\psi)}$ is hyperbolic. If $\psi_L$ is isotropic then $L(\psi)/L$ is a transcendental extension and we have a contradiction with the anisotropy of $\pi_L$. Hence $\psi_L$ is anisotropic and it follows from Theorem \ref{prop:subformthm}, $(a)$ that $\psi_L$ is similar to a subform of $\pi_L$. 
As they are of the same dimension, $\psi_L$ must be similar to $\pi_L$.  In particular $\Delta(\psi_L)\in\wp(L)$. It follows that $\Delta(\psi)= c \mod\wp(F)$, and hence $[1,c]\simeq [1,d]$. Then $L\simeq_F K$.
\end{proof}

\begin{prop}\label{prop:insepoverinsep} Let  $L=F(\sqrt{c})$ and $K=F(\sqrt{d})$ where   $c,d\in F\backslash F^2$.
Let $\pi$ and $\pi'$ be anisotropic $n$-fold Pfister forms such that $\pi_L$ and $\pi'_K$ are anisotropic and let $\rho= (\pi\perp\qf{1,c})_\an$ and $\psi=(\pi'\perp\qf{1,d})_\an$.  If $\rho_{F(\psi)}$ is isotropic then $L\simeq_F K$.
\end{prop}
\begin{proof} By Proposition \ref{lemma:reps},  $\rho$ and $\psi$ have dimension $2^n$. 
Suppose $\rho_{F(\psi)}$ is isotropic.
Using similar arguments to those in Proposition \ref{prop:oneoverother} we have that $\pi_{L(\psi)}$ is hyperbolic. 

Assume that $\qf{1,d}$ is anisotropic over $L$. Then $\psi_L$ is regular. If $\psi_L$ is isotropic then $L(\psi)/L$ is a purely transcendental extension and $\pi_{L(\psi)}$ hyperbolic implies that $\pi_L$ is hyperbolic, a contradiction. Hence  $\psi_L$ is  anisotropic and it follows from Theorem \ref{prop:subformthm}, $(a)$ that $\psi_L$ is weakly dominated by $\pi_L$. 
As $\psi_L$ and $\pi_L$ are of the same dimension, if  $\psi_L$ is weakly  dominated by $\pi_L$ then  $\psi_L$ and $\pi_L$  are similar. However, as $\psi_L$ is of type $(2^{n-1}-1, 2)$ and $\pi_L$ is of type $(2^{n-1},0)$, this cannot occur. 
Hence $\qf{1,d}$ is isotropic over $L$ and therefore similar to $\qf{1,c}$. That  $\qf{1,c}\simeq \qf{1,d}$ follows from Lemma \ref{cor:roundsim}. Then $L\simeq_F K$.
\end{proof}

In the preceding propositions we have considered anisotropic quadratic forms of dimension $2^n$ that are the anisotropic part of the difference of an $n$-fold Pfister form and a $1$-fold Pfister form (or a $1$-fold quasi-Pfister form in the case of singular forms). These are examples of characteristic $2$ versions of the  so-called `Twisted Pfister forms' studied in \cite{Hoffmann:twisted}.

\section{Total linkage of {P}fister forms}\label{section:totlink}

Let $\pi$ and $\pi'$ be $n$-fold Pfister forms over $F$. 
 We say that
\begin{itemize}
\item $\pi$ is \emph{totally (separably or inseparably) linked to} $\pi'$ if for every (separable or inseparable) quadratic field extension $K/F$ such that $\pi_K$ is hyperbolic   $\pi'_K$ is also hyperbolic,
\item $\pi$ and $\pi'$ are \emph{totally (separably or inseparably) linked} if $\pi$ is totally (separably or inseparably) linked to $\pi'$ and vice versa.
\end{itemize}
We are interested in the following question.

\begin{question} For which fields $F$ are all totally linked Pfister forms isometric?
For which fields $F$ are  totally separably/inseparably  linked Pfister forms isometric?
\end{question}

Note first that for $1$-fold Pfister forms, these linkage questions are trivial, as it easily follows from   Lemmas \ref{lemma:subs} and \ref{cor:roundsim} that every anisotropic $1$-fold Pfister form $\pi$ is determined up to isometry by the unique separable quadratic extension that splits $\pi$, and every anisotropic $1$-fold Pfister form remains anisotropic after passing to an inseparable quadratic extension. {These questions} are also  of course also trivial for hyperbolic Pfister forms.

We now show that in general being totally (separably or inseparably) linked is not a symmetric relation (Corollary \ref{ex:insepnonsym}), that being totally inseparably linked is independent of being totally separably linked (Corollary \ref{ex:insep}), and that two totally linked Pfister forms  need not be isometric (Theorem  \ref{ex:iso}).

Let $\pi$ be an $n$-fold Pfister form over  $F$. We  consider the following sets:
\begin{eqnarray*}
S(\pi) &=& \{ K\mid K/F \textrm{ is a proper  separable field extension such that $\pi_K$ is hyperbolic} \}
\\
I(\pi)&=& \{  K\mid K/F \textrm{ is a  proper inseparable field extension such that $\pi_K$ is hyperbolic} \}\,.
\end{eqnarray*}
For two field extensions $L/F$ and $K/F$ we denote by $L\cdot K$ the field compositum of $L$ and $K$ over $F$, when it exists. In particular, if  $K=F(\rho)$ for a  non-totally singular anisotropic quadratic form $\rho$ of dimension at least $3$   then the compositum $L\cdot K$ exists and coincides with $L(\rho_L)$ by \cite[\S 4.2]{HoffmannLaghribi:qfpfisterneigbourc2}.

\begin{prop}\label{newpropsep} For any integer $n>1$, 
let $\pi$ be an anisotropic $n$-fold Pfister form over   $F$ and $K=F(\alpha)$ where $\alpha^2+ \alpha = c\in F\setminus \wp(F)$ such $K\notin S(\pi)$. Let   $\rho=(\pi\perp [1,c])_\an$.
Then $L=F(\rho)$ satisfies the following:
\begin{enumerate}[$(i)$]
\item For every anisotropic $n$-fold Pfister form $\sigma$ over $F$, $\sigma_L$ is anisotropic. 
\item   $L\cdot K\in S(\pi_L)$.
\item For all proper quadratic separable extensions $K'/F$ such that
 $K'\notin S(\pi)$ and $K'\not\simeq_F K$  we have $L\cdot K'\notin S(\pi_L)$.
\item For all proper quadratic inseparable extensions $K''/F$ such that  $K''\not\in I(\pi)$ we have  $L\cdot K''\notin I(\pi_L)$.
\end{enumerate}
\end{prop}
\begin{proof} 
Firstly, note that as $\pi_K$ is anisotropic, $\rho$  is of dimension $2^n$ over $F$ by Proposition \ref{lemma:subs}.
Further as $\Delta(\rho)\notin \wp(F)$ we have that $\rho$ is not similar to a Pfister form.
Hence for  every anisotropic $n$-fold Pfister form  $\sigma$ over $F$ we have that $\sigma_L$ is anisotropic by    Theorem \ref{prop:subformthm}, $(a)$, therefore $L$ has property  $(i)$.
Secondly note  that $\rho_L$ is clearly isotropic, $[1,c]_L$ is anisotropic by Theorem \ref{prop:subformthm}, $(b)$ and $\pi_L$ is anisotropic by property $(i)$. Hence we have that $i_W((\pi\perp[1,c])_L)=2$ and it follows from  Proposition \ref{lemma:subs} that $L$ has property  $(ii)$.

Let $c'\in F\backslash\wp(F)$ be such that for $K'=F(\alpha)$ where $\alpha^2+\alpha=c'$ we have $K'\notin S(\pi)$ and $K'\not\simeq_F K$. Let $\rho'=(\pi\perp [1,c'])_\an$. Then $\rho'_{L}$ is anisotropic by Proposition \ref{prop:sepoversep}, and hence $L\cdot K'\notin S(\pi_L)$ by Proposition \ref{lemma:subs}. 
Hence $L$ has property $(iii)$.
Similarly, it  follows from Proposition \ref{prop:oneoverother} and Proposition \ref{lemma:subs} that $L$ has property $(iv)$.
\end{proof}

\begin{prop}\label{newpropinsep} For any integer $n>1$, 
let $\pi$ be an anisotropic $n$-fold Pfister form over  $F$  and $K=F(\sqrt{d})$ where $d\in F\setminus F^2$ such $K\notin I(\pi)$. Let $\rho=(\pi\perp\qf{1,d})_\an$. Then  $L=F(\rho)$ satisfies the following:
\begin{enumerate}[$(i)$]
\item For every anisotropic $n$-fold Pfister form $\sigma$ over $F$, $\sigma_L$ is anisotropic. 
\item   $L\cdot K\in I(\pi_L)$.
\item For all proper quadratic inseparable extensions $K'/F$ such that $K'\notin I(\pi)$ and $K'\not\simeq_F K$  we have $L\cdot K'\notin I(\pi_L)$.
\item For all proper quadratic separable extensions $K''/F$ such that  $K''\not\in S(\pi)$ we have  $L\cdot K''\notin S(\pi_L)$.
\end{enumerate}
\end{prop}
\begin{proof} 
Firstly,  as $\pi_K$ is anisotropic, $\rho$  is of dimension $2^n$ over $F$ by Proposition \ref{lemma:reps}. 
Further, as $\rho$ is singular  we have that $\rho$ is not similar to a Pfister form.
Hence for  every anisotropic $n$-fold Pfister form  $\sigma$ over $F$ we have that $\sigma_L$ is anisotropic by    Theorem \ref{prop:subformthm}, $(a)$, therefore $L$ has property  $(i)$.
It  follows from {Propositions \ref{lemma:reps}, \ref{prop:oneoverother} and \ref{prop:insepoverinsep}} that $L$ has properties $(ii)-(iv)$ using arguments similar to those used in  Proposition \ref{newpropsep} for the corresponding properties.
\end{proof}

\begin{cor}\label{ex:insepnonsym} For any integer $n>1$, let $\pi$ and $\pi'$ be anisotropic $n$-fold Pfister forms over $F$ such that   $\pi'$ is  not totally separably (resp.~inseparably) linked to $\pi$. Then there exists a field extension $E/F$ such that $\pi_E$ is totally separably  (resp.~inseparably) linked to $\pi'_E$ but not vice versa.
\end{cor}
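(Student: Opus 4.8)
The plan is to treat the separable case in detail; the inseparable case is entirely analogous, with Proposition \ref{newpropinsep} replacing Proposition \ref{newpropsep} and part $(e)$ of Proposition \ref{lemma:Pfist} replacing part $(d)$. Since $Q'$ is not totally separably linked to $Q$, I first fix a separable quadratic extension $K\in S(Q')\setminus S(Q)$, generated by $\alpha$ with $\alpha^2+\alpha=c\in F\setminus\wp(F)$; this $K$ will be a permanent witness. The field $E$ will be built so that, on one hand, every separable quadratic subfield of $Q_E$ also lies in $Q'_E$, giving that $Q_E$ is totally separably linked to $Q'_E$, while on the other hand the compositum $E\cdot K$ is a separable quadratic subfield of $Q'_E$ that is \emph{not} contained in $Q_E$, so the reverse linkage fails.

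I would construct $E$ as the union of a tower $F=E_0\subseteq E_1\subseteq\cdots$ obtained by transfinitely enumerating all pairs $(E_i,N)$ with $N$ a separable quadratic extension of $E_i$ such that $N\in S(Q_{E_i})$ but $N\notin S(Q'_{E_i})$, and for each such pair applying Proposition \ref{newpropsep} to the division algebra $Q'_{E_i}$ and the field $N$ to pass to $E_{i+1}=E_i(\rho)$. Property $(ii)$ of that proposition guarantees $E_{i+1}\cdot N\in S(Q'_{E_{i+1}})$, so the offending subfield is absorbed into $Q'$, and property $(i)$ guarantees that both $Q$ and $Q'$ remain division at each step and hence at the limit. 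In the union $E=\bigcup_i E_i$, any separable quadratic subfield of $Q_E$ is generated over some finite stage $E_i$, hence has the form $E\cdot N$ for some $N\in S(Q_{E_i})$ which, having been processed, lies in $S(Q'_{E_j})$ for some $E_j$; therefore $S(Q_E)\subseteq S(Q'_E)$.

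The key point, and the reason the witness survives, is that none of these steps ever forces $K$ into $Q$. Inductively $E_i\cdot K\notin S(Q_{E_i})$, so by Corollary \ref{cor:rightform}$(a)$ the form $\psi=[1,a+c]\bot b[1,a]$ (where $Q_{E_i}\simeq[a,b)_{E_i}$) is anisotropic over $E_i$ with $\Delta(\psi)=c\notin\wp(E_i)$. The form $\rho$ produced by the proposition is of type $(2,0)$, anisotropic, with $\Delta(\rho)$ the class of the slot of $N$, which is nontrivial. Since $N\not\simeq_{E_i}E_i\cdot K$ (as $N\in S(Q_{E_i})$ while $E_i\cdot K\notin S(Q_{E_i})$), the two forms have distinct Arf invariants and so are not similar. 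Proposition \ref{lemma:Pfist}$(d)$ then shows $\psi_{E_{i+1}}$ remains anisotropic, whence $E_{i+1}\cdot K\notin S(Q_{E_{i+1}})$ again by Corollary \ref{cor:rightform}$(a)$. Thus $E\cdot K\notin S(Q_E)$, while $E\cdot K\in S(Q'_E)$ because $K\in S(Q')$ and scalar extension preserves subfields; this exhibits the required asymmetry.

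I expect the main obstacle to be the bookkeeping of the transfinite construction rather than any single algebraic estimate: the reason a single application of Proposition \ref{newpropsep} does not suffice is that $Q_E$ may acquire separable quadratic subfields that are not composita of extensions of $F$, so one must saturate over \emph{every} separable quadratic subfield appearing over \emph{each} intermediate field and verify that this process terminates in a genuine field with the witness intact. In particular one must check that $E_i\cdot K$ remains a \emph{proper} separable extension at every stage, which holds because $Q'_{E_i}$ is division and contains it, forcing $c\notin\wp(E_i)$. Once this saturation-with-preserved-witness is carried out, the inseparable statement follows verbatim with $I(\cdot)$, Proposition \ref{newpropinsep}, and Proposition \ref{lemma:Pfist}$(e)$ replacing $S(\cdot)$, Proposition \ref{newpropsep}, and Proposition \ref{lemma:Pfist}$(d)$, the quasilinear parts now playing the role of the Arf invariants.
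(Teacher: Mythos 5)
Your proposal is correct and follows essentially the same route as the paper: fix the Artin--Schreier witness, absorb each offending separable subfield of $Q$ into $Q'$ via Proposition \ref{newpropsep}, and preserve the witness at every stage by comparing Arf invariants (resp.\ quasilinear parts) of the two $4$-dimensional forms and invoking Proposition \ref{lemma:Pfist}$(d)$ (resp.\ $(e)$), then pass to the union of the tower. The only difference is organizational --- you process one offending subfield at a time transfinitely where the paper takes a compositum over all offending subfields at each stage --- which does not change the substance of the argument.
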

\begin{proof} 
Assume first that  $\pi'$ is  not totally separably linked to $\pi$.
Fix a field extension $M/F$ such that $\pi_M$ and $\pi'_M$ are anisotropic and there exists a field $K'=M(\alpha)$ where $\alpha^2+\alpha=c'\in M\setminus \wp(M)$ such that
$K'\in S(\pi'_M)$ and $K'\notin S(\pi_M)$.
Further assume that there exists a field $K=M(\beta)$ where $\beta^2+\beta=c\in M\setminus \wp(M)$ such that
$K\in S(\pi_M)$ and  $K\notin S(\pi'_M)$. Then by  Proposition \ref{newpropsep},  for the form $\rho_K=(\pi_M\perp[1,c])_\an$  and  the field $L_K=M(\rho_K)$
we have   that $\pi_{L_{K}}$ and  $\pi'_{L_{K}}$ are anisotropic  and
$L_{K}\cdot K\in S(\pi'_{L_K})$.

As $K'\notin S(\pi_M)$, by  Proposition \ref{lemma:subs} the form  $\varphi=(\pi_M\perp[1,c'] )_\an$ is of dimension $2^n$. 
It also follows from Proposition \ref{lemma:subs} that  for all field extensions $E/M$ with $c'\notin \wp(E)$ and $\pi_E$ anisotropic, 
we have that $\pi_{E\cdot K'}$ is hyperbolic if and only if $\varphi_{E}$ is isotropic.
As $K\not\simeq_M K'$ we have that $\varphi_{L_K}$ is  anisotropic by Proposition \ref{prop:sepoversep}. Hence   $L_{K}\cdot K'\notin S(\pi_{L_{K}})$.

We now build the field $E$ inductively.
Let $F=F_0$ and
let $F_{j+1}$ be the field  compositum  of the extensions $L_K$ for all $K\in S(\pi_{F_{j}})$ such that $K\notin S(\pi'_{F_{j}})$ as in the previous paragraph with $M=F_{j}$. Let $E=\bigcup_{i=1}^\infty F_i$. Then by construction   $\pi_E$ is totally separably linked to $\pi'_E$ but not vice versa.

Assume now that  $\pi'$ is  not totally inseparably linked to $\pi$.
Fix a field extension $M/F$ such that $\pi_M$ and $\pi'_M$ are anisotropic and there exists a field $K'=M(\sqrt{d'})$ where $d'\in M\setminus M^2$ such that
$K'\in I(\pi'_M)$ and $K'\notin I(\pi_M)$.
Further assume that there exists a field $K=M(\sqrt{d})$ where $d\in M\setminus M^2$ such that
$K\in I(\pi_M)$ and  $K\notin I(\pi'_M)$. Then by  Proposition \ref{newpropinsep}, 
for the form $\rho_K=(\pi_M\perp\qf{1,d})_\an$ over $M$  and the field $L_K=M(\rho_K)$
we have   that $\pi_{L_{K}}$ and  $\pi'_{L_{K}}$ are anisotropic and
$L_{K}\cdot K\in I(\pi'_{L_K})$.

As $K'\notin I(\pi_M)$,  Proposition \ref{lemma:reps} implies  that $\psi=(\pi_M\perp \qf{1,d'})_\an$ is of dimension $2^n$. 
It also follows from  Proposition \ref{lemma:reps} that  for all field extensions $E/M$ with $d\notin E^2$ such that $\pi_E$ is anisotropic,
we have that $\pi_{E\cdot K'}$ is hyperbolic if and only if $\psi_{E}$ is isotropic.
As $K\not\simeq_M K'$ the form  $\psi_{L_K}$ is anisotropic by Proposition \ref{prop:insepoverinsep}. Therefore   $L_{K}\cdot K'\notin I(\pi_{L_{K}})$. 

The required field  can now be constructed inductively similarly to the construction in the first part of the proof. 
\end{proof}

\begin{cor} \label{ex:insep}  For any integer $n>1$,  let $\pi$ and $\pi'$ be anisotropic $n$-fold Pfister forms over $F$ such that    $\pi$ and $\pi'$ are not totally separably (resp.~inseparably) linked. Then there exists a field extension $E/F$ such that $\pi_E$ and $\pi'_E$ are anisotropic, totally inseparably (resp.~separably) linked but not totally separably (resp.~inseparably) linked.
\end{cor}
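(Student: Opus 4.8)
The statement packages two dual assertions via the \emph{resp.}; I will prove the first and indicate the interchange giving the second. So assume $Q$ and $Q'$ are not totally separably linked and construct $E/F$ with $Q_E,Q'_E$ division, totally inseparably linked, but not totally separably linked. The second assertion is obtained by interchanging `separable' and `inseparable', replacing Proposition~\ref{newpropinsep} by Proposition~\ref{newpropsep}, and using parts~$(c)$ and $(d)$ of Proposition~\ref{lemma:Pfist} wherever parts~$(b)$ and $(e)$ are used below. For the first assertion, after possibly swapping $Q$ and $Q'$ there is a separable quadratic extension $K_0=F(\alpha_0)$ with $\alpha_0^2+\alpha_0=c_0\in F\setminus\wp(F)$ lying in $S(Q)\setminus S(Q')$; writing $Q'\simeq[a',b')_F$, the nonsingular form $\psi_0=[1,a'+c_0]\bot b'[1,a']$ is anisotropic over $F$ by Corollary~\ref{cor:rightform}$(a)$. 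This single extension is the witness that $E$ is not totally separably linked.

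The guiding observation is that $E$ will be built as an increasing union of fields, each obtained from its predecessor by taking function fields of \emph{singular} anisotropic forms of type $(1,2)$, exactly the forms furnished by Proposition~\ref{newpropinsep}. By Proposition~\ref{lemma:Pfist}$(b)$, passing to the function field of a singular form keeps every nonsingular form anisotropic, so $\psi_0$ stays anisotropic at every stage and hence over $E$ (isotropy is detected at a finite stage). By Corollary~\ref{cor:rightform}$(a)$ this means $Q'$ never acquires $K_0$, whereas $Q$ retains it under base change. Since $Q_E$ will be division (next paragraph), $c_0\notin\wp(E)$, for otherwise $[1,c_0]_E\simeq\HH$ and $(Q_E,\Nrd_Q)$ would be hyperbolic, contradicting Lemma~\ref{lemma:splitting}. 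Thus $E(\alpha_0)$ is a genuine separable quadratic subfield of $Q_E$ but not of $Q'_E$, so `not totally separably linked' comes essentially for free from the choice of forms used to build $E$.

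To reach total \emph{inseparable} linkage I mimic the inductive construction in the proof of Corollary~\ref{ex:insepnonsym}, now repairing discrepancies in both directions. Given a field $M$ in the tower and an inseparable $N=M(\sqrt d)\in I(Q_M)\setminus I(Q'_M)$, I apply Proposition~\ref{newpropinsep} to $Q'_M$ and $N$ to get $L=M(\rho)$ with $\rho=b'[1,a']\bot\qf{1,d}$ singular; by $(ii)$, $L\cdot N\in I(Q'_L)$, while $Q_L$ keeps $L\cdot N$ by base change, so the discrepancy at $N$ is removed. I must check that $Q_M$, the algebra I did \emph{not} modify, gains nothing unwanted: by Corollary~\ref{cor:rightform}$(b)$ and Proposition~\ref{lemma:Pfist}$(e)$, $Q_M$ can acquire $M(\sqrt{d''})$ over $L$ only when $b[1,a]\bot\qf{1,d''}$ is similar to $\rho$, and comparing quasilinear parts together with Lemma~\ref{cor:roundsim} forces $M(\sqrt{d''})=N$; by Proposition~\ref{lemma:Pfist}$(b)$ it gains no separable extension. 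Discrepancies of the opposite orientation are handled by applying Proposition~\ref{newpropinsep} to $Q_M$ instead. Setting $F_0=F$ and letting $F_{j+1}$ be the compositum over $F_j$ of these fields, one for each discrepancy at level $j$, put $E=\bigcup_j F_j$. Proposition~\ref{newpropinsep}$(i)$ keeps $(Q,\Nrd_Q)$ and $(Q',\Nrd_{Q'})$ anisotropic along the tower, hence over $E$, so $Q_E,Q'_E$ are division by Lemma~\ref{lemma:splitting}. If some $E(\sqrt d)$ with $d\in F_j$ lay in $I(Q_E)\setminus I(Q'_E)$, then, isotropy being detected at a finite stage, there is a least $k\geqslant j$ with $F_k(\sqrt d)\in I(Q_{F_k})$; by minimality and base change $F_k(\sqrt d)\notin I(Q'_{F_k})$, so it is a discrepancy repaired at level $k+1$, contradicting $E(\sqrt d)\notin I(Q'_E)$. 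Thus $Q_E,Q'_E$ are totally inseparably linked.

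The main obstacle is the limit step: one must be sure that repairing discrepancies level by level genuinely yields \emph{total} inseparable linkage over the union, even though each new field introduces new inseparable extensions, and that each single application of Proposition~\ref{newpropinsep} to one algebra is harmless for the other. This second point is where Proposition~\ref{lemma:Pfist}$(e)$ and Lemma~\ref{cor:roundsim} do the real work, pinning the only new inseparable subfield to the one being repaired. By comparison, preserving the separable discrepancy is the easy part, resting solely on the fact (Proposition~\ref{lemma:Pfist}$(b)$) that function fields of singular forms create no separable subfields.
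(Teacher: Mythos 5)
Your proposal is correct and follows essentially the same route as the paper: repair inseparable discrepancies in both directions via Proposition~\ref{newpropinsep} (function fields of anisotropic type-$(1,2)$ forms), preserve the separable witness using Proposition~\ref{lemma:Pfist}(b), iterate and pass to the union, and obtain the \emph{resp.}\ case by swapping roles and using Proposition~\ref{newpropsep} with parts (c)/(d). Your extra care at the limit step and the explicit check that each repair affects only the targeted inseparable subfield are refinements of details the paper leaves implicit.
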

\begin{proof} Assume first that $\pi$ and  $\pi'$ are not totally separably linked.
Fix a field extension $M/F$ such that $\pi_M$ and $\pi'_{M}$ are anisotropic  and there exists a field $K''\in S(\pi_M)$ but $K''\notin S(\pi'_M)$ and let $K''=M(\alpha)$ such that $\alpha^2+\alpha=c\in M\backslash\wp(M)$. Further assume that there exists a field $K\in I(\pi_M)$  but $K\notin I(\pi'_M)$ and let $K=M(\sqrt{d})$ for some $d\in M\backslash M^2$. 
Then by Proposition \ref{newpropinsep}, for the  form $\rho_K=(\pi'_M\perp \qf{1,d})_\an$  and  the field $L_K=M(\rho_K)$ we have that  $\pi_{L_{K}}$ and $\pi'_{L_{K}}$ are anisotropic, $L_{K}\cdot K\in I(\pi'_{L_{K}})$ and $L_K\cdot K''\notin S(\pi'_M)$.

 Now assume that there exists a field $K'=M(\sqrt{d'})$ where $d'\in M\setminus M^2$ such that $K'\in I(\pi'_M)$  and $K'\notin I(\pi_M)$. 
Then by Proposition \ref{newpropinsep}, for the  form $\rho'_{K'}=(\pi_M\perp\qf{1,d'})_\an$ and  the field $L'_{K'}=M(\rho')$ we have that  $\pi_{L'_{K'}}$ and $\pi'_{L'_{K'}}$ are anisotropic and $L'_{K'}\cdot K'\in I(\pi_{L'_{K'}})$.
As $K''\notin S(\pi'_M)$,  it follows from Lemma  \ref{lemma:subs}, that 
$\psi=(\pi'_M\perp [1,c])_\an$ is of dimension $2^n$. It also follows from Lemma  \ref{lemma:subs} that for all field extensions $E/M$ with $d'\notin E^2$ and $\pi'_E$ anisotropic, we have that $\pi'_{E\cdot K'}$ is hyperbolic  if and only if $\psi_{E}$ is isotropic. 
Hence by   Proposition \ref{prop:oneoverother}, $\psi_{L'_{K'}}$ is  anisotropic and therefore 
 and $L'_{K'}\cdot K''\notin S(\pi'_{L'_{K'}})$.

Consider the field compositum of
 field extensions $L_K/M$ and $L'_{K'}/M$  as above
for all $K$ such that  $K\in I(\pi_M)$ but   not in $I(\pi'_M)$,  and all $K'$ such that  $K'\in I(\pi'_M)$  but not in  $I(\pi_M)$.
Using this type of field compositum, we can inductively construct the required field extension $E/F$  in a similar manner to the construction in Corollary \ref{ex:insepnonsym}.
The statement on non-totally inseparably linked Pfister forms follows in a similar manner using Proposition  \ref{newpropsep}.
\end{proof}

\begin{lemma}\label{lemma:getseplink} 
 Let $\pi$ and $\pi'$ be anisotropic  $n$-fold Pfister forms over $F$ such that  $\pi$ and $\pi'$  are not isometric. Then there exists a field extension $K/F$ such that:
 \begin{enumerate}[$(i)$]
\item  $\pi_K\not\simeq\pi'_K$. 
\item $(\pi_K\perp\pi'_K)_\an$ is similar to an $n$-fold Pfister form.
\item For all anisotropic quadratic forms $\rho$ such that $\dim(\rho)\leqslant 2^n$ we have that $\rho_K$ is anisotropic. In particular, $\pi_K$ and 
${\pi'_K}$ are anisotropic.
\end{enumerate}
  \end{lemma}
\begin{proof} Let $F_0=F$ and $\rho_0= (\pi\perp\pi')_\an$. For $i\geqslant 1$, let $F_i=F_{i-1}(\rho_{i-1})$ and $\rho_{i}= ((\rho_{i-1})_{F_{i}})_\an\simeq ((\pi\perp\pi')_{F_{i}})_\an$. 
As both $\pi$ and $\pi'$ are nonsingular, it follows that $\rho_i$ is nonsingular, and in particular, of even dimension for all $i$. Moreover, 
as $\dim(\rho_{i+1})<\dim(\rho_{i})$ for all $i$ such that $\rho_{i}$ is nontrivial, there exists some $m>0$ such that $\dim(\rho_m)\neq0$ and  $\dim(\rho_i)=0$ for all $i>m$. In particular, the form $\rho_m$ is even dimensional and becomes hyperbolic over its own function field, and hence is similar to a Pfister form by \cite[(23.4)]{Elman:2008}.

As both $\pi$ and $\pi'$ represent $1$, we have that $\dim(\rho_0)<2^{n+1}$. Therefore by the Arason-Pfister Hauptsatz  \cite[(23.7)]{Elman:2008}, we have that  $2^n\leqslant\dim(\rho_i)<2^{n-1}$ for all $i\leqslant m$. Hence $\dim(\rho_m)=2^n$ and, again by the Hauptsatz, $\rho_m$ is similar to an  $n$-fold Pfister form over $F_m$. Hence $F_m$ has property $(i)$ and $(ii)$. 
 Moreover, as $\dim(\rho_i)>2^{n}$ for all $i\in\{0,\ldots, m-1\}$, we have that $F_m$ has property $(iii)$ by Theorem \ref{prop:subformthm}, $(b)$.
\end{proof}

\begin{thm}\label{ex:iso} For any integer  $n>1$,  let $\pi$ and $\pi'$ be anisotropic $n$-fold Pfister forms over $F$ such that    $\pi$ and $\pi'$ are not isometric. Then there exists a field extension $E/F$ such that $\pi_E$ and $\pi'_E$ are anisotropic, totally separably and inseparably linked but not isometric.
\end{thm}
\begin{proof}
By Lemma \ref{lemma:getseplink}, we may assume that $(\pi\bot\pi')_\an$ is similar to an $n$-fold Pfister form $\rho$ over $F$. For any field extension $K/F$ we have that $\pi_K\simeq \pi'_K$ if and only if $\rho_K$ is hyperbolic.

We may  construct a field extension $E/F$ in a similar  way to the construction of the fields in   Corollary \ref{ex:insep}   so that    $\pi_E$ and $\pi'_E$ are anisotropic  and totally  separably and inseparably linked. Further, by Proposition \ref{newpropsep}, $(i)$ and Proposition \ref{newpropinsep}, $(i)$, the Pfister form $\rho_E$ is anisotropic, and hence $\pi_E\not\simeq \pi'_E$.
\end{proof}

In Example \ref{ex:main} we give a pair of Pfister forms satisfying the hypotheses of Corollaries \ref{ex:insepnonsym}   and \ref{ex:insep} and Theorem \ref{ex:iso}. That is, Pfister forms that are neither separably nor inseparably linked, and hence also not isometric. This shows that examples of  Pfister forms with the linkage properties claimed to exist at the start of the section do indeed exist. In fact, 
the Pfister forms in Example \ref{ex:main} share no common quadratic extension of the base field over which they become hyperbolic, which is a much stronger property than needed to satisfy 
the hypotheses of Corollaries \ref{ex:insepnonsym}   and \ref{ex:insep} and Theorem \ref{ex:iso} We use the following easy lemma.
%

\begin{lemma}\label{lemma:1var}
Let $K=F(x)$, where $x$ is an indeterminate and $a\in F$. Further, let $\rho$ be a  quadratic form over $F$ and $\delta=\qf{a_1,\ldots,a_n}_b$ for some $a_1,\ldots, a_n\in F^\times$ be such that $\rho\perp\qf{a_1,\ldots,a_n,1}$ is anisotropic over $F$. Then $\varphi=\rho\perp\delta\otimes[1,x]\perp[1,a+x]$ is anisotropic over $F(x)$. 
\end{lemma}
\begin{proof} We may  set $T=x^{-1}$ and work  over $F(T)$. 
Suppose $\varphi$ is isotropic.  
By multiplying an isotropic vector of $\varphi$ by an appropriate power of  $T^2$, we can find 
an element $b\in F[T]$ represented by $\rho$ and 
$f_1,\ldots, f_{n+1}, g_1,\ldots, g_{n+1}\in F[T]$ such that $b, f_1,\ldots, f_{n+1}, g_1,\ldots, g_{n+1}$ are not all divisable by $T$ and 
$$b+ \sum_{i=1}^n a_i(f_i^2 +  f_ig_i +T^{-1}g_i^2) + f_{n+1}^2+f_{n+1}g_{n+1}+(T^{-1}+a)g_{n+1}^2=0\,.$$ 
Multiplying the above equation by $T$ and reducing modulo $T$, we obtain 
$$ \sum_{i=1}^n a_i(g_i(0))^2 +  (g_{n+1}(0))^2\,{=0}.$$
As $\qf{a_1,\ldots,a_n,1}$ is anisotropic, we get that $g_1,\ldots, g_{n+1}$  are divisible by $T$. Substituting $g_i= Th_i$ for $i=1,\ldots, {n+1}$ in the above equation and reducing modulo $T$ gives 
$$b(0)+ \sum_{i=1}^n a_i(f_i(0))^2 + (f_{n+1}(0))^2=0\,.$$ 
The anisotropy of $\rho\perp\qf{a_1,\ldots,a_n,1}$ now implies that $b,f_1,\ldots, f_{n+1}$ are also divisible by $T$, contradicting our hypothesis. Hence $\varphi$ is anisotropic.  \end{proof}

  \begin{ex}\label{ex:main} For $n>1$, let
 $K=F(x_1,\ldots, x_n, y_1,\ldots, y_n)$,  where
     $x_1,\ldots, x_n,y_1,\ldots, y_n$ are  indeterminates . Then $\pi_1=\pfr{x_1,\ldots, x_n}$ and  $\pi_2=\pfr{y_1,\ldots, y_n}$  are $n$-fold Pfister forms over  $K$ that 
  do not become hyperbolic  over any common separable or inseparable quadratic extension of $K$. In particular, they are not isometric nor totally separably or inseparably linked.
  \end{ex}
\begin{proof}  Note first that if $\pi_1$ and $\pi_2$ do not become hyperbolic over any common separable or inseparable quadratic extension of $K$, then it is obvious that they are not isometric nor totally separably or inseparably linked.
Using Lemma \ref{lemma:1var} and inducting on the number of variables $n$, we see that $\pi_i$  is anisotropic for $i=1,2$. Let $\delta_1$ be the pure part of $\pff{x_1,\ldots, x_{n-1}}$ and $\delta_2$ be the pure part of $\pff{y_1,\ldots, y_{n-1}}$ and let 
 $$\rho=\delta_1\otimes[1,x_n ] \perp\delta_2\otimes[1,y_n] \perp [1,x_n+y_n]\,.$$
Note that $\rho$ is Witt equivalent to $\pi_1\perp\pi_2$ and anisotropic, again using Lemma \ref{lemma:1var}.

Suppose $\pi_1$ and $\pi_2$ split over a common separable quadratic extension of $K$. Then it follows from Proposition \ref{lemma:subs} that there exist $w_1,\ldots, w_{n-1},z_1,\ldots, z_{n-1}\in K^\times$ and an element $a\in K$ such that $\pi_1\simeq\pfr{w_1,\ldots, w_{n-1},a}$ and $\pi_2\simeq \pfr{z_1,\ldots, z_{n-1},a}$. Let $\delta'_1$ be the pure part of $\pff{w_1,\ldots, w_{n-1}}$ and $\delta'_2$ be the pure part of $\pff{z_1,\ldots, z_{n-1}}$. Then 
$$  \pi_1\perp \pi_2 \simeq  \delta'_1\otimes[1,{a} ] \perp\delta'_2\otimes[1,{a}] \perp 2\times \HH$$
and hence $i_W(\pi_1\perp\pi_2)\geqslant 2$, contradicting the anisotropy of $\rho$. Hence $\pi_1$ and $\pi_2$ do not split over any common separable quadratic extension of $K$.

Suppose $\pi_1$ and $\pi_2$ split over a common inseparable quadratic extension of $K$. Then it follows from Proposition \ref{lemma:reps} that there exist $u_1,\ldots, u_{n-1},v_1,\ldots, v_{n-1}\in K^\times$ and an element ${d}\in K^\times$ such that $\pi_1\simeq\pfr{d, u_1,\ldots, u_{n-1}}$ and $\pi_2\simeq \pfr{d,v_1,\ldots, v_{n-1}}$.
 Let $\delta''_1$ be the pure part of $\pff{u_1,\ldots, u_{n-2}}$ and $\delta''_2$ be the pure part of $\pff{v_1,\ldots, v_{n-2}}$. Then 
\begin{eqnarray*}  \pi_1\perp \pi_2 &\simeq & \pff{d}\otimes\delta''_1\otimes[1,u_{n-1} ] \perp \pff{d}\otimes \delta''_2\otimes[1,v_{n-1}]\\ && \perp d[1,u_{n-1}] \perp d[1,v_{n-1}]  \perp [1,u_{n-1}+v_{n-1}]\perp \HH\,.
\end{eqnarray*}
The form $ d[1,u_{n-1}] \perp d[1,v_{n-1}] $ is clearly isotropic and hence
 $i_W(\pi_1\perp\pi_2)\geqslant 2$, contradicting the anisotropy of $\rho$. Hence $\pi_1$ and $\pi_2$ are not hyperbolic over any common separable quadratic extension of $K$. 
\end{proof}

Our definitions of a Pfister form totally, separably or inseparably linked to another Pfister form and of total, separable or inseparable linkage of two Pfister forms can be made independent of the characteristic of the field. Of course, over fields of characteristic different from $2$ there are no inseparable quadratic extensions, so our notions of inseparable linkage are trivial in this case. In particular, all notions of total separable linkage are equivalent to total linkage. 
It is straightforward to adapt the arguments needed for the proof of  Corollary \ref{ex:insepnonsym} and Theorem  \ref{ex:iso} to fields of characteristic different from $2$, substituting analogous notions, such as the discriminant for the Art invariant, where necessary. More precisely, one obtains the following results.

\begin{cor}\label{ex:insepnonsymcharnot2}  Let $K$ be a field of characteristic different from two. For any integer $n>1$, let $\pi$ and $\pi'$ be anisotropic $n$-fold Pfister forms over $K$ such that   $\pi'$ is  not totally   linked to $\pi$. Then there exists a field extension $E/K$ such that $\pi_E$ is totally  linked to $\pi'_E$ but not vice versa.
\end{cor}

\begin{thm}\label{ex:isocharnot2} Let $K$ be a field of characteristic different from two. For any integer  $n>1$,  let $\pi$ and $\pi'$ be anisotropic $n$-fold Pfister forms over $K$ such that    $\pi$ and $\pi'$ are not isometric. Then there exists a field extension $E/K$ such that $\pi_E$ and $\pi'_E$ are anisotropic, totally   linked but not isometric.
\end{thm}

\section{Linkage of Quaternion algebras}

\subsection{Quaternion algebras and $2$-fold Pfister forms}
In this section we apply the results of Section \ref{section:totlink} in the case of $2$-fold Pfister forms to give examples of quaternion algebras in characteristic $2$ analogous to those constructed in \cite[(2.1)]{GaribaldiSaltman:2010}. 
We refer to \cite{Draxl:1983} as a general reference on finite-dimensional algebras over fields.
By a \emph{quaternion $F$-algebra} we mean a $4$-dimensional central simple $F$-algebra. 
We say a quaternion  $F$-algebra is \emph{division} if for all $a,b\in Q$ we have that $a\cdot b=0$ if and only if $a$ or $b=0$. We also call non-division quaternion $F$-algebras \emph{split}.
Let $Q$ and $Q'$ be a quaternion  $F$-algebras. If $Q$ and $Q'$ are isomorphic we write $Q\simeq Q'$.

For a field extension $L/F$, we denote the quaternion $L$-algebra $Q\otimes_{{F}}L$ by $Q_L$.  
Let $K/F$ be a proper field extension, that is $K\neq F$.
We say that a quaternion $F$-algebra $Q$ \emph{contains  $K$} if there exists a subfield of $Q$  that is $F$-isomorphic to $K$. In this case  $K$ is necessarily a quadratic extension of $F$. Moreover $Q$ contains $K$ if and only if $Q_K$ is split by \cite[\S 14, Theorems 4 and 5]{Draxl:1983}.

Let $Q$ and $Q'$ be quaternion $F$-algebras. 
 We say that
\begin{itemize}
\item $Q$ is \emph{totally (separably or inseparably) linked to} $Q'$ if every (separable or inseparable) quadratic field extension of $F$ contained in $Q$ is also contained in  $Q'$,
\item $Q$ and $Q'$ are \emph{totally (separably or inseparably) linked} if $Q$ is totally (separably or inseparably) linked to $Q'$ and vice versa.
\end{itemize}

Any non-division quaternion $F$-algebra is isomorphic  to the $F$-algebra of two-by-two matrices over $F$. Hence all such  $F$-algebras are isometric and contain every quadratic field extension of $F$. Therefore our linkage properties are only of interest for division algebras.

We let $\Nrd_Q:Q\rightarrow F$ denote the reduced norm map (see \cite[\S 22, Def.~2]{Draxl:1983} for the definition). Considering $Q$ as an $F$-vector space, the pair $(Q,\Nrd_Q)$ is a $4$-dimensional quadratic form over $F$. In fact, $(Q,\Nrd_Q)$ is a $2$-fold Pfister form. This is easy to compute from the explicit basis for a quaternion algebra given in, for example, \cite[p25]{Knus:1998}.
Conversely, every $2$-fold Pfister form $\pi$ over $F$ can be associated with  a quaternion $F$-algebra $Q$ such that $\pi\simeq (Q,\Nrd_Q)$ via its Clifford algebra (see \cite[\S11 and \S12]{Elman:2008}).

\begin{prop}\label{lemma:splitting}
Let $Q_1$ and $Q_2$ be quaternion $F$-algebras and let $\pi_i=(Q_i,\Nrd_{Q_i})$ for $i=1,2$. 
 Then
 $Q_1\simeq Q_2$.
if and only if  $\pi_1\simeq \pi_2$.
In particular, $Q_i$ is split if and only if $\pi_i$ is hyperbolic.
\end{prop}
\begin{proof}
See  \cite[(12.5)]{Elman:2008}.
\end{proof}

We immediately get the following.

\begin{cor}\label{cor:obv}
Let $Q_1$ and $Q_2$ be quaternion $F$-algebras and let $\pi_i=(Q_i,\Nrd_{Q_i})$ for $i=1,2$. 
Then $Q_1$ is totally (separably or inseparably) linked to $Q_2$ if and only if $\pi_1$ is totally (separably or inseparably) linked to $\pi_2$. Moreover, $Q_1$ and $Q_2$ are totally (separably or inseparably) linked if $\pi_1$ and $\pi_2$ are totally (separably or inseparably) linked. 
\end{cor}

Hence the  results of Section \ref{section:totlink} directly give  constructions of  totally linked division quaternion $F$-algebras that are not isomorphic, totally separably linked but not inseparably linked quaternion division algebras, and so on.

In \cite{Meyer:infgen}, it is shown that the construction of totally linked non-isomorphic division quaternion algebras from \cite{GaribaldiSaltman:2010} can be adapted to give examples of  fields  of characteristic different from $2$ over which there are infinitely many non-isomorphic totally linked division quaternion algebras. It is straightforward to adapt the results in  Section \ref{section:totlink}
 with methods similar to  \cite{Meyer:infgen} in order to  give fields of characteristic $2$ over which there are infinitely many totally linked non-isometric $n$-fold Pfister forms, and to construct fields over which there are infinitely many totally separably (resp.~ inseparably) linked $n$-fold Pfister forms that are not inseparably linked (resp.~separably linked), and so on. Applying Proposition \ref{lemma:splitting} in the case of $2$-fold Pfister forms to these fields gives examples of fields of characteristic $2$ with infinitely many non-isomorphic totally linked division quaternion algebras. 
 In particular,  we get the following result. We leave the details of the proof to the reader. 
 
 \begin{prop}
 Let $F$ be a field such that there exist infinitely many non-isomorphic division  quaternion $F$-algebras. Then there exists a field extension $K/F$
such that there exist infinitely many totally linked division quaternion $K$-algebras that are not isomorphic.
  \end{prop}

For the rest of the section we consider particular fields over which all $n$-fold Pfister forms are hyperbolic for $n>2$, and hence linkage properties are uninteresting for these forms. Over these fields,  linkage properties are still of interest for $2$-fold Pfister forms, and hence for quaternion algebras. We state our main results in term of $2$-fold Pfister forms, but the translation to results on quaternion algebras is easy using Proposition \ref{lemma:splitting}.

\subsection{Fields with a unique inseparable quadratic extension}

We call $F$, a field of characteristic $2$,  \emph{perfect} if $F^2=F$.
As all $2$-fold Pfister forms are hyperbolic over perfect fields, and all quaternion algebras split, questions on linkage are uninteresting over such fields.

In this section we consider fields with $[F:F^2]=2$, that is, fields with a unique inseparable  quadratic extension. Note that $[F:F^2]\leqslant 2$ is equivalent to all $3$-dimensional totally singular quadratic forms being isotropic over $F$. Over such fields  all $n$-fold Pfister forms are hyperbolic for $n>2$. 
Moreover,  over such fields all $2$-fold Pfister forms  are clearly totally inseparably linked. However, it is not the case that any  totally separably linked Pfister forms are isometric  over all such fields, as we now show.

We need the following lemma, which constructs a field using the  compositum of function fields of totally singular quadratic forms. As regular totally singular forms may not be regular when extended to the function field of another totally singular form, these constructions are more subtle than those in Section \ref{section:totlink}. As such, we give a detailed proof (in the spirit of \cite[\S5]{mammone:u2}) to avoid any confusion with such a construction.

\begin{lemma}\label{lemma:nearlysquareclosing}
Let $F$ be a field. Then there exists a field extension $K/F$ with  $[K:K^2]=2$ such that for all non-isometric  $2$-fold Pfister forms $\pi$ and $\pi'$ over $F$, the forms $\pi_K$ and $\pi'_K$ are not isometric. 
\end{lemma}
\begin{proof} 
First note that if  $[F:F^2]=1$, that is, $F$ is perfect,  we may take $K=F(x)$, where $x$ is an indeterminate, as $[K:K^2]=2$ and the condition on the Pfister forms is trivial. 

Otherwise, let  $S_0$ be the set of all totally singular quadratic forms of dimension $3$ over $F$. Choose a well-ordering on the set $S_0$ and index its elements by ordinal numbers. So for some ordinal $\alpha$, we have $S_0=\{\psi_i\mid i<\alpha\}$. We construct a field $F^1$ by transfinite induction as follows: let $F_0=F$ and define 
\begin{itemize}
\item $F_k=F_{k-1}(\psi_k)$  if $k$ is not a limit ordinal and $\psi_k$ is anisotropic over $F_{k-1}$,
\item  $F_k=F_{k-1}$  if $k$ is not a limit ordinal and $\psi_k$ is isotropic over $F_{k-1}$,
\item $F_k=\bigcup_{j<k}F_j$  if k is a limit ordinal.
\end{itemize}
We then set $F^1=F_\alpha$.
For all $k$, if $\rho$ is  {an anisotropic} nonsingular quadratic form over $F_{k}$ such that $\rho_{F_{k+1}}$ is hyperbolic then either $\rho$ is hyperbolic or Witt equivalent to an orthogonal sum of forms similar to $3$-fold Pfister forms  over $F_{k}$  by \cite[(1.4)]{laghribi:wittkers}. In particular, for all $2$-fold Pfister forms $\pi$ and $\pi'$ over $F_{k}$ the form $(\pi\perp\pi')_{F_{k+1}}$ is hyperbolic if and only if $\pi\perp\pi'$ is hyperbolic or   Witt equivalent to a sum of $3$-fold Pfister forms over $F_{k}$. As $\dim(\pi\perp\pi')_{{\rm an}}{\leq} 6$,  if  $\pi\perp\pi'$ is Witt equivalent to a sum of $3$-fold Pfister form then it is hyperbolic by the Arason-Pfister Hauptsatz  \cite[(23.7)]{Elman:2008}. Hence $\pi_{F_{k+1}}\simeq \pi'_{F_{k+1}}$ if and only if  $\pi\simeq\pi'$ over $F_{k}$.
Hence it follows by transfinite induction that for all non-isometric  $2$-fold Pfister forms $\pi$ and $\pi'$ over $F$, the forms $\pi_{F^1}$ and $\pi'_{F^1}$ are non-isometric.

Let $S_1$ be the set of  all totally singular quadratic forms of dimension $3$ over $F^1$ and construct $(F^1)^1=F^2$ by the same procedure. Repeating this process, for $n\geqslant 1$, let $F^n={(F^{n-1})}^1$ and let $K=\bigcup_{n=1}^\infty F^n$. 
As there are no anisotropic totally singular forms of dimension $3$ over $K$, we must have the $[K:K^2]\leqslant 2$.  If there are no anisotropic $2$-fold Pfister forms over $F$, then we are done. Otherwise, by the construction, 
we have that for all non-isometric  $2$-fold Pfister forms $\pi$ and $\pi'$ over $F$, the forms $\pi_K$ and $\pi'_K$ are not isometric by  same argument given above for $F^1$. 
In particular, 
for all anisotropic $2$-fold Pfister forms $\pi$ over $F$ we have that $\pi_K$ is anisotropic, and hence $K$ is not perfect. Therefore $[K:K^2]=2$. 
\end{proof}

\begin{thm}\label{ex:isofnear2close}  Let $\pi$ and $\pi'$ be  $2$-fold Pfister forms over  $F$ such that   $\pi$ and $\pi'$ are not isometric. Then there exists a field extension $E/F$ with $[E:E^2]=2$ and such that $\pi_E$ and $\pi'_E$ are anisotropic and totally separably linked but not isometric.
\end{thm}
\begin{proof}
As at least one of $\pi$ and $\pi'$ is anisotropic, we have that $F$ is not perfect. We may then construct a field extension $K/F$ as in Corollary \ref{ex:insep} such that $\pi$ and $\pi'$ are totally separably linked but not isometric.  Using Lemma \ref{lemma:nearlysquareclosing}, we can then find a field extension $L/K$ with  $[L:L^2]=2$ such that 
$\pi_L\not\simeq\pi'_L$.  If $\pi_L$ and $\pi'_L$ are totally separably linked, then we are done. Otherwise we repeat the above process inductively to construct the required field $E$. 
\end{proof}

We now give two classes of fields of characteristic $2$  over which the separable linkage of two $n$-fold Pfister forms implies that they are isometric.

\subsection{Pfister forms over $F((t))$ with $F$ perfect}\label{subsection:laurent}

For the rest of this subsection, let  $F$ be a perfect field of characteristic 2. In this subsection, we consider the field of formal Laurent series over $F$, denoted $F((t))$.

\begin{prop}\label{prop:allsame} For every   $2$-fold Pfister form $\pi$ over  $K=F((t))$ we have that $\pi\simeq \pfr{t,a}$ for some $a\in F$.
\end{prop}
\begin{proof}
See \cite[Chapt.~ XIV, \S 5, Prop.~12]{serre1979local} for the result in terms of quaternion algebras. 
\end{proof}

\begin{prop}\label{prop:quatlaur}
Let $K=F((t))$ and take $a,b\in F$. Then $\pfr{t,a}\simeq \pfr{t,b}$ over $K$ if and only if $[1,a]\simeq [1,b]$ over $F$.
\end{prop}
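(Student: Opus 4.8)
The plan is to translate the algebra isomorphism $[a,t)_K\simeq[b,t)_K$ into a statement about the associated norm forms using Lemma \ref{lemma:splitting}, and then reduce the resulting $4$-dimensional form identity over $K=F((t))$ to a $2$-dimensional form identity over $F$. By Lemma \ref{lemma:splitting}, $[a,t)_K\simeq[b,t)_K$ if and only if $\pfr{t,a}\simeq\pfr{t,b}$ as quadratic forms over $K$, that is $[1,a]\bot t[1,a]\simeq [1,b]\bot t[1,b]$. The reverse implication is immediate: if $[1,a]\simeq[1,b]$ over $F$, then scalar-extending to $K$ and taking orthogonal sum with $t[1,a]\simeq t[1,b]$ gives the isometry of the Pfister forms, hence the algebra isomorphism. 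So the real content is the forward direction.

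For the forward direction, the key idea is a residue/valuation argument adapted to the quadratic form setting over the complete discretely valued field $K=F((t))$ with uniformizer $t$ and perfect residue field $F$. I would set $\sigma=\pfr{t,a}\perp(-\pfr{t,b})$, but since we are in characteristic $2$ the cleaner route is to work directly with the isometry. The natural tool is the theory of quadratic forms over complete discretely valued fields: a nonsingular quadratic form over $K$ decomposes (up to isometry) into a part with entries of even value (the \emph{unramified} or \emph{first residue} part, which comes from a form over $F$) and $t$ times another such part (the \emph{second residue} part), and this decomposition together with the two residue forms over $F$ is an isometry invariant. Applying this to the isometry $[1,a]\bot t[1,a]\simeq[1,b]\bot t[1,b]$, the first residue forms give $[1,\bar a]\simeq[1,\bar b]$ over the residue field $F$, where $\bar a,\bar b$ are the residues; since $a,b\in F$ already, this reads $[1,a]\simeq[1,b]$ over $F$, as desired.

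The step I expect to be the main obstacle is justifying the uniqueness of the residue decomposition for nonsingular quadratic forms in characteristic $2$ over $F((t))$, since the familiar Springer-type decomposition theorem is usually stated in the bilinear or odd-residue-characteristic setting, and in characteristic $2$ one must be careful about the interaction between the nonsingular binary blocks $[c,d]$ and the uniformizer. The concrete worry is showing that the ``even part'' $[1,a]$ and ``odd part'' $t[1,a]$ cannot mix under an isometry into something whose first residue differs from $[1,a]$. I would handle this by writing an arbitrary isometry as an invertible $K$-linear change of variables, analyzing the $t$-adic valuations of the matrix entries of the Gram-type data, and arguing that after clearing denominators and reducing mod $t$ the even-valued part must map isometrically onto the even-valued part over the residue field $F$; perfectness of $F$ (so that $F^2=F$, giving $\wp(F)$ good behaviour and ensuring residues of squares are squares) is exactly what makes this reduction clean. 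As a more self-contained alternative avoiding a general residue theorem, one could use the earlier machinery directly: assume $[1,a]\not\simeq[1,b]$ over $F$, so $a+b\notin\wp(F)$, meaning $\Delta$ distinguishes them; then $\pfr{t,a}$ and $\pfr{t,b}$ have distinct behaviour detected by an explicit valuation-theoretic invariant of the $2$-fold Pfister form over $K$, contradicting $\pfr{t,a}\simeq\pfr{t,b}$. Either way, perfectness of $F$ is the hypothesis doing the essential work, and pinning down the precise residue-uniqueness statement in characteristic $2$ is where the care is needed.
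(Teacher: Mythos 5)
Your reduction to quadratic forms via Lemma \ref{lemma:splitting} and the easy direction are exactly right, but the forward direction as written has a gap precisely where you flag it: the entire content of the statement is that an isometry $[1,a]\bot t[1,a]\simeq[1,b]\bot t[1,b]$ over $F((t))$ forces $[1,a]\simeq[1,b]$ over $F$, and you only gesture at a proof of the required residue-uniqueness (an analysis of ``$t$-adic valuations of the matrix entries of the Gram-type data'' is not available in a usable form in characteristic $2$, where a quadratic form is not determined by a symmetric matrix, and carrying out such an argument by hand is genuinely delicate). Your fallback via the Arf invariant also does not work as stated: both $\pfr{t,a}$ and $\pfr{t,b}$ are $2$-fold Pfister forms, so both have trivial Arf invariant, and the ``explicit valuation-theoretic invariant'' you would need is left unspecified. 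Finally, perfectness of $F$ is not what is doing the work in this proposition (it is used elsewhere in the subsection, e.g.\ for Proposition \ref{prop:allsame}); the residue statement you need holds over $F((t))$ for arbitrary $F$ of characteristic $2$.

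The paper sidesteps the uniqueness-of-residues issue with a small trick you should adopt: from $\pfr{t,a}\simeq\pfr{t,b}$ one gets that $\pfr{t,a}\bot\pfr{t,b}$ is hyperbolic (for any nonsingular $q$ in characteristic $2$ the diagonal is a totally isotropic subspace of half the dimension of $q\bot q$, so $q\bot q$ is hyperbolic). Regrouping, $\pfr{t,a}\bot\pfr{t,b}\simeq\bigl([1,a]\bot[1,b]\bigr)\bot t\bigl([1,a]\bot[1,b]\bigr)$, and now one only needs the \emph{hyperbolicity} half of the Springer-type theorem for $F((t))$ --- \cite[(19.5)]{Elman:2008}, valid in characteristic $2$ --- to conclude that $[1,a]\bot[1,b]$ is hyperbolic over $F$; Witt cancellation then gives $[1,a]\simeq[1,b]$. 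This replaces the strong (and unproved, in your write-up) claim that isometries respect the first and second residue forms by a clean, citable hyperbolicity criterion, which is exactly the statement your argument is missing.
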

\begin{proof}
If $[1,a]\simeq [1,b]$ then clearly $\pfr{t,a}\simeq \pfr{t,b}$.
Assume that $\pfr{t,a}\simeq \pfr{t,b}$. Then  $\pfr{t,a}\perp \pfr{t,b}$ is hyperbolic and it  then follows from   \cite[(19.5)]{Elman:2008} that $[1,a]\perp [1,b]$ is hyperbolic. Hence $[1,a]\simeq [1,b]$ by Witt cancellation, \cite[(8.4)]{Elman:2008} . 
\end{proof}

\begin{thm}\label{thm:main} 
Let $\pi$ and $\pi'$ be anisotropic $2$-fold Pfister forms  over $K=F((t))$. If there exists  an element $c\in F$  with  $c\notin \wp(F)$  such that  for $L=K(\alpha)$, where $\alpha^2+\alpha=c$, we have that  $\pi_L$ and $\pi'_L$ are hyperbolic, then $\pi\simeq \pi'$. \end{thm}
\begin{proof}
By Proposition \ref{prop:allsame}, $\pi\simeq \pfr{t,a}$ and $\pi'\simeq \pfr{t,b}$ for some $a,b\in F$. Assume that  $\pi_L$ and $\pi'_L$ are hyperbolic. Then $[1,a+c]\perp t[1,a]$ is   isotropic by Proposition \ref{lemma:subs}.
 It then follows from  \cite[(19.5)]{Elman:2008}  that at least one the quadratic forms $[1,a+c]$ and $[1,a]$ must be isotropic. We have that  $[1,a]$ is anisotropic by the assumption that $\pi$ is anisotropic, hence
  $[1,a+c]$ is isotropic. It then follows from  \cite[(13.2)]{Elman:2008} that $a=c+ \wp(F)$. Arguing similarly, we see that $b=c+ \wp(F)$.
 Hence $[1,a]\simeq d[1,b]$ for some $d\in F^\times$  and therefore
  $[1,a]\simeq [1,b]$ by Lemma \ref{cor:roundsim}. Hence $\pi\simeq \pi'$. \end{proof}

\begin{cor}
Let $\pi$ and $\pi'$ be anisotropic $2$-fold Pfister forms over $K=F((t))$. If $\pi$ and $\pi'$ are totally separably linked, then $\pi\simeq \pi'$.
\end{cor}
\begin{proof} By Proposition \ref{prop:allsame}, for every $2$-fold Pfister form $\pi$ over $K$ there exists an element $a\in F\setminus \wp(F)$ such that for $L=K(\alpha)$ where  $\alpha^2+\alpha=a$, we have $\pi_L$ is hyperbolic. The result then follows from Theorem \ref{thm:main}.
\end{proof}

\begin{remark}
Note that the argument in Theorem \ref{thm:main} requires that the separable extension linking the two $2$-fold Pfister forms $\pi$ and $\pi'$ over  $K=F((t))$ to be defined over $F$. In general, this condition cannot be weakened to linkage over $K$. That is, if $\pi$ and $\pi'$  share a separable quadratic extension of $K$, then they are not necessary isometric in general.  

Indeed, 
if separably linked did imply isometric for any two $2$-fold Pfister forms over $K$, then by Corollary \ref{cor:obv}, every pair of separably linked quaternion $K$-algebras would be isomorphic.
By \cite[p.258]{Draxl:quatsep} this implies  there would only be one division quaternion $K$-algebra up to isomorphism, and hence only one isometry class of anisotropic $2$-fold Pfister forms, and this is not always the case.
In fact it easily follows from Propositions \ref{prop:allsame}  and \ref{prop:quatlaur}  that there is a one-to-one correspondence between separable quadratic extensions of a perfect field and isomorphism classes of quaternion algebras over the field of Laurent series over the same perfect field.
\end{remark}

\subsection{Pfister forms over global fields}

Recall, as stated in the introduction of \cite{GaribaldiSaltman:2010}, that totally linked  quaternion algebras  are always isomorphic over global fields of arbitrary characteristic. This result is well-known to experts, but as no proof appears in the literature as yet,  we give a proof  in characteristic $2$ that shows that  totally separably linked $2$-fold Pfister forms over a global field are isometric. 

By a global field we mean a finite extension of $\mathbb{F}_{2^n}(t)$, where $t$ is a variable and  $\mathbb{F}_{2^n}$ is the field with $2^n$ elements for some $n\in \mathbb{N}$.
Recall that for any field $F$  with $[F:F^2]=2$ and a finite field extension $L/F$  we have that $[L:L^2]=2$.  Hence any global field $F$ has a unique inseparable quadratic extension, and thus all $2$-fold Pfister forms over $F$ are totally inseparably linked.

\begin{thm}\label{thm:global}
Let $F$ be a global field of characteristic $2$ and let $\pi$ and $\pi'$ be anisotropic $2$-fold Pfister forms over  $F$ that are  totally separably linked. Then $\pi\simeq \pi'$.
\end{thm}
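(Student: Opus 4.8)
The plan is to reduce the statement about division quaternion algebras over a global field $F$ of characteristic $2$ to a statement about separable quadratic extensions, and then invoke the classification of quaternion algebras over global fields via their ramification (local) data. First I would fix quaternion bases and write $Q\simeq[a,b)_F$ and $Q'\simeq[a',b')_F$, with associated norm forms $\pi=(Q,\Nrd_Q)$ and $\pi'=(Q',\Nrd_{Q'})$, both $2$-fold Pfister forms by the discussion preceding Lemma~\ref{lemma:splitting}. The key translation is Lemma~\ref{lemma:splitting}: $Q\simeq Q'$ if and only if $\pi\simeq\pi'$, equivalently (since both are Pfister forms) if and only if $\pi\perp\pi'$ is hyperbolic, which by Proposition~\ref{lemma:Pfist}~$(a)$ is governed by whether $\pi$ and $\rho:=\pi'$ are similar.

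Next I would exploit total separable linkage. The goal is to show the two quaternion algebras are locally isomorphic at every place of $F$, because over a global field a quaternion algebra is determined by its set of ramified places (the local invariants in the Brauer group, summing to zero by reciprocity), and two such algebras are isomorphic if and only if they ramify at exactly the same places. At each place $v$, the completion $F_v$ is either a local field or, in the split case, contributes nothing. The heart of the argument is then a local-global principle: if $Q$ and $Q'$ share all separable quadratic subfields over $F$, I want to conclude they ramify at the same set of places. The cleanest route is to show that at each place $v$ where one of them, say $Q$, is ramified (i.e.\ $Q_{F_v}$ is division), there is a separable quadratic extension $K/F$ that embeds in $Q$ but, were $Q'$ unramified at $v$, could not embed in $Q'$ — contradicting separable linkage. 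For this I would use that a quaternion algebra over the local field $F_v$ is split by a separable quadratic extension of $F_v$, and then approximate: by weak approximation (or the density of $F$ in $F_v$ with control at finitely many other places), realize a suitable local separable quadratic extension as the completion of a global separable quadratic extension $K/F$ embedding in $Q$, arranged so that $K_v$ does not split $Q'_{F_v}$ when $Q'$ is unramified there.

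The main obstacle I expect is precisely this globalization step: passing from the purely local statement ``the ramification of $Q$ and $Q'$ differs at $v$'' to the existence of an \emph{honest global} separable quadratic extension $K/F$ witnessing the failure of linkage. One must produce $c\in F\setminus\wp(F)$ such that the class of $c$ in $F_v/\wp(F_v)$ forces $K=F(\alpha)$, $\alpha^2+\alpha=c$, to embed in $Q$ (via Corollary~\ref{cor:rightform}~$(a)$, i.e.\ isotropy of $[1,a+c]\perp b[1,a]$) while obstructing its embedding in $Q'$ at the place $v$. This requires simultaneous control of the behaviour of $c$ at the finitely many ``bad'' places, which is exactly what weak approximation for $\wp$ in characteristic $2$ (Artin--Schreier theory, the analogue of the Grunwald--Wang / Hasse norm machinery) delivers; the simplification credited to Wadsworth in the acknowledgements presumably streamlines this. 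Once the ramification sets are shown to coincide, $Q$ and $Q'$ have the same class in $\mathrm{Br}(F)$, hence $\pi\simeq\pi'$ by Lemma~\ref{lemma:splitting}, and therefore $Q\simeq Q'$.
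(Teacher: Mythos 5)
Your overall architecture is the same as the paper's: assume $Q\not\simeq Q'$, invoke the Albert--Brauer--Hasse--Noether theorem to find a place where the local behaviour differs, and then use approximation to manufacture a global Artin--Schreier extension $L=F(\alpha)$, $\alpha^2+\alpha=c$, that embeds in one algebra but not the other. However, the local conditions you propose to impose are inverted, and as stated that step fails. You take a place $v$ where $Q$ is ramified and $Q'$ is unramified and try to arrange that ``$K_v$ does not split $Q'_{F_v}$''; this is impossible, since $Q'_{F_v}$ is already split and every extension splits it. More generally, the obstruction to $K$ embedding in the division algebra $Q'$ can never be located at a place where $Q'$ is unramified: $K$ embeds in $Q'$ precisely when $K$ splits $Q'$, i.e.\ when $K\otimes_F F_w$ is a field at every place $w$ where $Q'$ ramifies. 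So the witness place must be one where $Q'$ \emph{is} ramified and $Q$ is \emph{not}; such a place exists after possibly swapping $Q$ and $Q'$, which is harmless since linkage is assumed in both directions.

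The paper's construction runs as follows: pick $p$ with $Q_{F_p}$ split and $Q'_{F_p}$ division, let $q_1,\dots,q_n$ be the places where $Q$ is not split, and use the approximation theorem to find $a\in F$ with $v_p(a)=1$ and $v_{q_i}(a)=-1$ for all $i$. Hensel's lemma gives $a\in\wp(F_p)$, so $L=F(\alpha)$ with $\alpha^2+\alpha=a$ satisfies $L\cdot F_p=F_p$; hence $L$ fails to split $Q'$ at $p$ and $L$ is not contained in $Q'$. On the other hand $a\notin\wp(F_{q_i})$ (an element of odd negative valuation is never of the form $x^2+x$), so $L\cdot F_{q_i}$ is a quadratic field extension of $F_{q_i}$ and splits the local division algebra $Q_{F_{q_i}}$; as $Q$ is split at every other place, $Q_L$ is split and $L$ embeds in $Q$, contradicting total separable linkage. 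Your plan correctly identifies approximation for $\wp$-classes as the crux, but to make it work you must (i) flip the roles of ramified and unramified at the distinguishing place, and (ii) impose the complementary condition $c\notin\wp(F_{q})$ at \emph{all} ramified places $q$ of $Q$ simultaneously, not only at the distinguishing place, in order to guarantee that $L$ actually embeds in $Q$.
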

\begin{proof}
Suppose $\pi$ and $\pi'$ are not isometric.
Then, without loss of generality, by the local-global principle of  Albert-Brauer-Hasse-Noether, \cite[(8.1.17)]{neukirch2013cohomology} and Proposition \ref{lemma:splitting},
 there is a prime $p$ of $F$ such that  $\pi_{F_p}$ is hyperbolic but $\pi'_{F_p}$ is not, where $F_p$ is the completion of $F$ with respect to the associated discrete valuation $v_p$.
Let $q_1,\dots,q_n$ be all the primes for which $\pi_{F_{q_i}}$ is anisotropic.
By the  approximation theorem, \cite[(2.4.1)]{Prestel:2005}, there exists $a \in F$ with $v_p(a)=1$ and $v_{q_i}(a)=-1$ for all $i\in \{1 \ldots , n\}$.
It follows  by Hensel's Lemma, \cite[(4.1.3)]{Prestel:2005}, that  $a \in \wp(F_p)$, while $a \not \in \wp(F_{q_i})$ for all  $i\in \{1 \ldots , n\}$.

Let $L=F(\alpha)$ where $\alpha^2+\alpha=a$. Then $L \cdot F_p=F_p(\alpha)=F_p$,  and therefore $L$ is a subfield of $F_p$.
Consequently, $\pi'_L$ is not hyperbolic.
Conversely,  $L \cdot F_{q_i}$ is a quadratic extension of $F_{q_i}$, and  $\pi_{L\cdot F_{q_i}}$ is hyperbolic. For all primes $r$ of $F$ other than $q_i$, $\pi_{F_r}$ is hyperbolic. Therefore, $\pi_L$ is locally hyperbolic everywhere, and hence hyperbolic  by \cite[(8.1.17)]{neukirch2013cohomology}.   Hence  $\pi$ and $\pi'$ are not totally separably linked.
\end{proof}

Note that it is easy to adapt the above proof to give the same result for fields of characteristic different from $2$.


\begin{thebibliography}{10}




%
%
%
%


\bibitem{Baeza:1978}
R. Baeza.
\newblock {\em Quadratic forms over Semilocal rings}, volume~655 of {\em Lecture Notes in Mathematics}.
\newblock Berlin Heidelberg New York: Springer, 1978.




\bibitem{Draxl:1983}
P.K. Draxl.
\newblock {\em Skew Fields}, volume~81 of {\em London Math. Soc. Lect. Notes}.
\newblock Cambridge University Press, 1983.


\bibitem{Draxl:quatsep}
P.K~ Draxl.
\newblock \"{U}ber gemeinsame separabel-quadratische Zerf\"{a}llungsk\"{o}rper von Quaternionenalgebren.
\newblock {\em Nachr. Akad. Wiss. G\"{o}ttingen Math.-Phys. Kl. II}, 16:251--259, 1975.





\bibitem{ElduqueVilla:2005}
A.~Elduque and O.~Villa.
\newblock A note on the linkage of {H}urwitz algebras.
\newblock {\em Manuscripta Math.}, 117(1):105--110, 2005.

\bibitem{Elman:2008}
R.~Elman, N.~Karpenko, and A.~Merkurjev.
\newblock {\em The Algebraic and Geometric Theory Quadratic Forms}, volume~56
  of {\em Colloq. Publ., Am. Math. Soc.}
\newblock Am. Math. Soc., 2008.



\bibitem{Prestel:2005}
A.J.~Engler and  A.~Prestel.
\newblock {\em Valued fields}.
\newblock Berlin: Springer, 2005.








\bibitem{fairve:thesis}
F.~Faivre.
\newblock Liaison des formes de {P}fister et corps de fonctions de quadriques en
  caract\'eristique $2$.
\newblock {\em Ph.D. thesis, Universit\'e de Franche-Comt\'e}, 2006.

\bibitem{GaribaldiSaltman:2010}
S.~Garibaldi and D.J. Saltman.
\newblock Quaternion algebras with the same subfields.
\newblock In {\em Quadratic forms, linear algebraic groups, and cohomology},
  volume~18 of {\em Dev. Math.}, pages 225--238. Springer, New York, 2010.




\bibitem{Hoffmann:twisted}
D.~Hoffmann.
\newblock Twisted Pfister forms.
\newblock {\em Doc. Math. 1}, 03:67--102, 1996.




\bibitem{HoffmannLaghribi:Isoqfffquadricc2}
D.~Hoffmann and A.~Laghribi.
\newblock Isotropy of quadratic forms over the function field of a quadric in
  characteristic 2.
\newblock {\em J. Algebra}, 295:362--386, 2006.

\bibitem{HoffmannLaghribi:qfpfisterneigbourc2}
D.~Hoffmann and A.~Laghribi.
\newblock Quadratic forms and {P}fister neighbors in characteristic 2.
\newblock {\em Trans. Amer. Math. Soc.}, 356(10):4019--4053, 2004.




\bibitem{Knus:1998}
M.-A. Knus, A.S. Merkurjev, M.~Rost, and J.-P. Tignol.
\newblock {\em The Book of Involutions}, volume~44 of {\em Colloq. Publ., Am.
  Math. Soc.}
\newblock Am. Math. Soc., 1998.






\bibitem{laghribi:qfdim6}
A.~Laghribi.
\newblock Certaines formes quadratiques de dimension au plus $6$ et corps des
  fonctions en caract\'eristique $2$.
\newblock {\em Israel Journal of Mathematics}, 129(1):317--361, 2002.





\bibitem{laghribi:wittkers}
A.~Laghribi.
\newblock Witt kernels of function field extensions in characteristic $2$.
\newblock {\em Journal of Pure and Applied Algebra}, 199(1):167--182, 2005.







\bibitem{Lam:char2quat}
T.Y.~Lam.
\newblock On the linkage of quaternion algebras.
\newblock {\em Bull. Belg. Math. Soc.}, 9:415--418, 2003.



\bibitem{mammone:u2}
P.~Mammone, J.-P.~Tignol and A.~Wadsworth.
\newblock Fields of characteristic $2$ with prescribed $u$-invariants.
\newblock {\em Math. Ann.}, 290(1):109--128, 1991.




\bibitem{Meyer:infgen}
J.~Meyer.
\newblock Division algebras with infinite genus.
\newblock {\em Bull. Lond. Math. Soc.}, 46:463--468, 2014.




\bibitem{neukirch2013cohomology}
J.~Neukirch, A.~Schmidt, and K.~Wingberg.
\newblock {\em Cohomology of number fields}, volume~323 of {\em Grundlehren Math. Wiss.}
\newblock Springer-Verlag, 2008.




\bibitem{serre1979local}
J.-P. Serre.
\newblock {\em Local fields}, volume~67.
\newblock New York: Springer, 1979.

\end{thebibliography}
\end{document}